\documentclass[11pt]{amsart}
\usepackage[margin=1in]{geometry}  

\linespread{1.5}

\usepackage{amsmath, amsthm, latexsym, amssymb, amsfonts}
\usepackage[pdftex]{graphicx,color} 
\usepackage{algorithm}
\usepackage{algpseudocode}
\usepackage[all]{xy}

 \bibliographystyle{plain}
 \theoremstyle{plain}
 \newtheorem{tm}{Theorem}[section]
 \newtheorem{lm}[tm]{Lemma}
\newtheorem{coro}[tm]{Corollary}
 \newtheorem{pro}[tm]{Proposition}

 \theoremstyle{definition}
 \newtheorem{defi}[tm]{Definition}
 \newtheorem{rema}[tm]{Remark}
 
  \newtheorem{ex}[tm]{Example}

 \newtheorem*{th*}{Theorem}

\newcommand{\cl}[1]{\mathcal{#1}}

\newcommand{\Z}{\mathbb Z}
\newcommand{\N}{\mathbb N}
\newcommand{\R}{\mathbb R}
\newcommand{\F}{\mathbb F}

\newcommand{\cA}{\cl A}

\newcommand{\cC}{\cl C}

\newcommand{\K}{{\mathbb K}}

\newcommand{\la}{\langle}
\newcommand{\ra}{\rangle}

\newcommand{\sig}{\sigma}
\newcommand{\Sig}{\Sigma}

\newcommand{\m}{\mathfra\K{m}}

\newcommand{\Ker}{\operatorname{Ker}}
\newcommand{\im}{\operatorname{Im}}

\newcommand{\Cl}{\operatorname{Cl}}

\newcommand{\dis}{\displaystyle}
\def\aa{{\bf \alpha}}
\def\bb{\beta}
\def\a{{\bf a}}
\newcommand{\be}{\mathbf b}
\def\t{{\bf t}}

\def\uu{{\bf u}}
\def\vv{{\bf v}}
\def\x{{\bf x}}

\def\m{{\bf m}}

\def\ev{{\text{ev}}}

\newcommand{\cK}{ \K^r}
  \newcommand{\height}{\operatorname{height}}
    \newcommand{\rank}{\operatorname{rank}}


 \usepackage{graphics}
 \usepackage[english]{babel}

\usepackage[latin1]{inputenc}

\usepackage{times}
\usepackage[T1]{fontenc}
 \usepackage{graphics}

\begin{document}


\title{Toric Codes and Lattice Ideals}
 \thanks{The author is supported by T\"{U}B\.{I}TAK Project No:114F094}

\author[Mesut Sahin]{Mesut \c{S}AH\.{I}N}
\address[Mesut Sahin]{Department of Mathematics, Hacettepe  University, Ankara,TURKEY}
\email{mesut.sahin@hacettepe.edu.tr}
\keywords{Evaluation code, toric code, lattice ideal, toric ideal, Hilbert function}
\subjclass[2010]{Primary 14M25, 14G50; Secondary 52B20}


\begin{abstract} Let $X$ be a complete simplicial toric variety over a finite field $\F_q$ with homogeneous coordinate ring $S=\F_q[x_1,\dots,x_r]$ and split torus $T_X\cong (\F^*_q)^n$. We prove that vanishing ideal of a subset $Y$ of the torus $T_X$ is a lattice ideal if and only if $Y$ is a subgroup. We show that these subgroups are exactly those subsets that are parameterized by Laurents monomials. We give an algorithm for determining this parametrization if the subgroup is the zero locus of a lattice ideal in the torus. We also show that vanishing ideals of subgroups of $T_X$ are radical homogeneous lattice ideals of dimension $r-n$. We identify the lattice corresponding to a degenerate torus in $X$ and completely characterize when its lattice ideal is a complete intersection. We compute dimension and length of some generalized toric codes defined on these degenerate tori.
\end{abstract}

\maketitle

\section{Introduction}
Let $X$ be a complete simplicial toric variety over a field $\K$, corresponding to a fan $\Sig\subset \R^n$ and let $T_X \cong (\K^*)^n$  be split which is the case for instance when $\K$ is algebraically closed or finite. Denote by $\rho_1,\dots,\rho_r$ the rays in $\Sig$ and $\vv_1,\dots,\vv_r\in \Z^n$ the corresponding primitive lattice vectors generating them. Given a vector $\uu=(u_1,\dots,u_r)$ we use $\mathbf{t}^\uu$ to denote the monomial 
 $\mathbf{t}^\uu=t_1^{u_1}\dots t_r^{u_r}$. Recall the following dual exact sequences:
$$\dis \xymatrix{ \mathfrak{P}: 0  \ar[r] & \Z^n \ar[r]^{\phi} & \Z^r \ar[r]^{{\bb}} & \cA \ar[r]& 0},$$  
where $\phi$ is the matrix with rows $\vv_1,\dots,\vv_r$, and
$$\dis \xymatrix{ \mathfrak{P}^*: 1  \ar[r] & G \ar[r]^{i} & (\K^*)^r \ar[r]^{\pi} & T_X \ar[r]& 1},$$
where $\pi:(t_1,\dots, t_r)\mapsto [\mathbf{t}^{\uu_1}: \cdots : \mathbf{t}^{\uu_n}],$ with $\uu_1,\dots, \uu_n$ being the columns of $\phi$ and $G=\Ker(\pi)$.

Let
$\dis S=\K[x_1,\dots, x_r]=\bigoplus_{\aa \in \cA} S_{\aa}$ 
be the homogeneous coordinate or Cox ring of $X$, multigraded by $\cA \cong \Cl(X)$ via $\bb_j:=\deg_{\cA}(x_j):=\bb(e_j)$, where $e_j$ is the standart basis element of $\Z^r$ for each $j=1,\dots,r$. 
The irrelevant ideal is a monomial ideal defined by $B=\la x^{\hat{\sig}} ~:~ \sig \in \Sig\ra$, where $\dis x^{\hat{\sig}}=\Pi_{\rho_i \notin \sig}^{}x_i$. Thus, $T_X \cong (\K^*)^r /G$ and $X\cong (\cK\setminus V(B)) /G$ as a geometric quotient if we additionally assume that the order of the torsion part of the group $\cA$ is coprime to the characteristic of the field $\K$, since $G$ will be reductive in this case, see \cite{Coxhom,CLSch}. This assumption is satisfied in particular when $\cA$ is free or $\K$ has characteristic zero. The homogeneous polynomials of $S$ are supported in the semigroup $\N\bb$ generated by $\bb_1,\dots,\bb_r$, i.e. $\dim_{\K}{S}_{\aa}=0$ when $\aa \notin \N\bb$. 

Next, we recall evaluation codes defined on subsets $Y=\{[P_1],\dots,[P_N]\}$ of the torus $T_X$. Fix a degree $\aa\in\N\beta$ and a monomial $F_0=\x^{\phi(\m_0)+\a} \in S_{\aa}$,
where  $\m_0\in \Z^n$, $\a$ is any element of $\Z^r$ with $\deg(\a)=\aa$, and $\phi$ as in the exact sequence $\mathfrak{P}$.
This defines the {\it evaluation map}
\begin{equation}\label{e:evalmap}
\ev_{Y}:S_\aa\to \F_q^N,\quad F\mapsto \left(\frac{F(P_1)}{F_0(P_1)},\dots,\frac{F(P_N)}{F_0(P_N)}\right).
\end{equation}
The image $\cC_{\aa,Y}=\text{ev}_{Y}(S_\aa)$ is a linear code, called the {\it generalized toric code}. The block-length $N$, the dimension $k=\dim_{\F_q}(\cC_{\aa,Y})$, and the minimum distance $d=d(\cC)$ are three basic parameters of $\cC_{\aa,Y}$. Minimum distance is the minimum of the number of nonzero components of nonzero vectors in $\cC_{\aa,Y}$. Toric codes was introduced for the first time by Hansen in \cite{Ha0, Ha1} and studied later in \cite{Jo,Ru} for the special case of $Y=T_X$. Clearly, the block-length of $\cC_{\aa,Y}$ equals $N=|T_X|=(q-1)^n$ in this case. But it is not known in the general case. An algebraic way to compute the dimension and length of a generalized toric code is given in \cite{sasop}. This method is based on the observation that the kernel of the evaluation map above is determined by the vanishing ideal of $Y$ defined as follows. For $Y\subset X$, we define the vanishing ideal $I(Y)$ of $Y$ to be the ideal generated by homogeneous polynomials vanishing on $Y$. $I(Y)$ is a {\it complete intersection} if it is generated by
a regular sequence of homogeneous polynomials $F_1,\dots, F_k\in S$ 
where $k$ is the height of $I_Y$. When the vanishing ideal $I_Y$ is a complete intersection, bounds on the minimum distance of $\cC_{\aa,Y}$  is provided in \cite{sop}. There are interesting results about evaluation codes on complete intersections in literature, e.g. \cite{DRT,GLSch,Ha3}. These results motivate studying vanishing ideals of special subsets of the torus $T_X$ and characterize when they are complete intersections. 

When the vanishing ideal is a binomial or a lattice ideal it is relatively easier to characterize whether it is a complete intersection, see \cite{MT} and references therein. In the case that the toric variety $X$ is a projective space over a finite field, there are interesting results relating lattice ideals and subtori of the torus $T_X$. Vanishing ideals of subgroups of $T_X$ parameterised by Laurent monomials are studied in \cite{RSV} and shown to be homogeneous lattice ideals of dimension $1$.  Binomial ideals that are vanishing ideals are characterised in \cite{TV}. It was also proven that homogeneous lattice ideals of dimension $1$ correspond to subgroups of the torus $T_X$. In \cite{NPV}, these subgroups are identified as being the subgroups of $T_X$ parameterized by Laurent monomials. In the case that the toric variety $X$ is weighted projective space over a finite field, vanishing ideal of the torus itself is shown to be a homogeneous lattice ideal of dimension $1$ in \cite{DN}. 

We use some of the main ideas in these works and extend the main results to more general toric varieties over any field in the present paper. In section $2$, we show that a homogeneous binomial ideal gives a submonoid of $T_X$, see Proposition \ref{p:monoid} and Corollary \ref{c:latticemonoid}. Conversely, we show that vanishing ideals of submonoids of $T_X$ are lattice ideals in Proposition \ref{p:binom}. We prove that vanishing ideal of a subset $Y$ of the torus $T_X$ is a lattice ideal if and only if $Y$ is a subgroup, for $\K=\F_q$, in Theorem \ref{t:subgpsLattices}. In section $3$, we focus on finite submonoids of the torus when $\K$ is algebraically closed or finite. We show that these submonoids coincide with the submonoids parameterized by Laurent monomials on a finite subgroup of $\K^*$, see Theorem \ref{t:parametrizingsubmonoids}. We also give an algorithm for parameterizing the submonoid obtained from a homogeneous lattice ideal, when $\K=\F_q$, see Proposition \ref{p:parameterisingV_L} and Algorithm $1$. In section $4$, we determine the homogeneous lattice defining the vanishing ideals of subtori of $T_X$ called the degenerate tori which are parameterized by diagonal matrices, see Theorem \ref{t:I(Y_A)}. We also characterize when these are complete intersection in Proposition \ref{p:ci}. In section $5$, we list main properties of vanishing ideals of arbitrary subsets of the torus $T_X$ and prove that they must be radical homogeneous ideals of dimension $r-n$, see Theorem \ref{t:primdec}. Finally, we give quick applications of these on evaluation codes on degenerate tori, see Corollaries \ref{c:dimcode} and \ref{c:lengthcode}.

\section{Lattice Ideals and Submonoids of the Torus} \label{S:latticeideals} 
In this section we explore the connection between homogeneous lattice ideals and submonoids of the torus $T_X$. 

Recall that $\x^{\uu}:=x_1^{u_1}\dots x_m^{u_m}$ denotes a Laurent monomial for a vector $\uu\in\Z^r$. A binomial is a difference 
$\x^{\a}-\x^{\be}$ of two monomials corresponding to the vectors $\a,\be\in \N^r$. An ideal is called a \textit{binomial ideal} if it is generated by binomials. An exact sequence $\mathfrak{P}$ gives a grading on the polynomial ring $S=\K[x_1,\dots,x_r]$, where $\deg_{\cA}(x_j):=\bb_j$ is the $j$-th column of the degree matrix $\bb$. A homogeneous ideal in $S$ with respect to this grading is called $\cA$-graded or $\cA$-homogeneous. We usually just say that it is \textit{homogeneous} if the group $\cA$ is clear from the context.

Throughout $[P]=G \cdot P$ denotes a point in $X$. We use $[1]$ to denote $[1:\cdots:1]$. Similarly, $[V(B)]$ denotes the set of elements $[P]$ suct that $P\in V(B)\subset \K^r$. If $[P],[P']\in X$ then $[P]\cdot[P']:=[PP']$ is a well-defined element of $X\cup [V(B)]$. The set $[V(B)]$ does not include $[1]$ but it is closed with respect to this coordinatewise multiplication operation as $x^{\hat{\sig}} (PP')=x^{\hat{\sig}} (P)x^{\hat{\sig}} (P')=0$ when $[P]\in [V(B)]$ or $[P']\in [V(B)]$. Although $X$ might not be closed with respect to this multiplication, $X\cup [V(B)]$ is a monoid with identity $[1]$. Finally, if $Y\subseteq X$ is a submonoid of $X\cup [V(B)]$, then so is $Y\cup [V(B)]$.

For a homogeneous ideal $J$ of $S$, let $$V_X(J):=\{[P]\in X : F(P)=0, \mbox{for all homogeneous}\: F\in J\}.$$

\begin{pro}\label{p:monoid} If $J\subset S$ is a homogeneous binomial ideal then $V_X(J)\cup [V(B)]$ is a submonoid of $X\cup [V(B)]$. If furthermore, $PP'\notin V(B)$, for all $[P],[P']\in V_X(J)$, then $V_X(J)$  is a submonoid of $X\cup [V(B)]$. In particular, $V_X(J)\cap T_X$ is a submonoid of $T_X$.
\end{pro}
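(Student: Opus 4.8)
The plan is to verify directly that $V_X(J)\cup[V(B)]$ is closed under the coordinatewise multiplication and contains the identity $[1]$; the monoid associativity is inherited from $X\cup[V(B)]$, which the text has already established is a monoid. Since $J$ is homogeneous, $[1]=[1:\cdots:1]$ lies in $V_X(J)$ because every binomial generator $\x^{\a}-\x^{\be}$ of $J$ is $\cA$-homogeneous, hence $\deg_{\cA}(\a)=\deg_{\cA}(\be)$, and one checks that $\x^{\a}(1,\dots,1)-\x^{\be}(1,\dots,1)=1-1=0$. (Strictly, one should note $\x^\a$ here is an honest monomial, so this evaluation makes sense even though the Cox coordinates $x_i$ can vanish.)

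The heart of the argument is closure. Take $[P],[P']\in V_X(J)\cup[V(B)]$. If either point is in $[V(B)]$, then $[PP']\in[V(B)]$ by the observation recalled just before the proposition (namely $x^{\hat\sig}(PP')=x^{\hat\sig}(P)x^{\hat\sig}(P')=0$), so we are done. Otherwise $[P],[P']\in V_X(J)$, and I must show $[PP']\in V_X(J)\cup[V(B)]$. Let $F=\x^{\a}-\x^{\be}$ be a binomial generator of $J$, with $\deg_{\cA}(\a)=\deg_{\cA}(\be)=:\aa$. The key computation is multiplicativity of monomial evaluation: $\x^{\a}(PP')=\x^{\a}(P)\,\x^{\a}(P')$ and likewise for $\be$. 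Hence
\[
F(PP')=\x^{\a}(P)\x^{\a}(P')-\x^{\be}(P)\x^{\be}(P').
\]
Since $[P]\in V_X(J)$ we have $\x^{\a}(P)=\x^{\be}(P)$, and since $[P']\in V_X(J)$ we have $\x^{\a}(P')=\x^{\be}(P')$; substituting gives $F(PP')=\x^{\be}(P)\x^{\be}(P')-\x^{\be}(P)\x^{\be}(P')=0$. As the binomial generators determine membership in $V_X(J)$ (a homogeneous element of $J$ is a combination of the generators with homogeneous coefficients, and evaluation respects this once we know it vanishes on generators), this shows $[PP']\in V_X(J)$ provided $PP'\notin V(B)$, and $[PP']\in[V(B)]$ otherwise. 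Either way $[PP']\in V_X(J)\cup[V(B)]$. This proves the first assertion; the second is immediate, since the extra hypothesis $PP'\notin V(B)$ for all $[P],[P']\in V_X(J)$ forces $[PP']\in V_X(J)$ in the case analysis above, so $V_X(J)$ itself is closed, and it contains $[1]$.

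For the final sentence, I apply the ``In particular'' with the following observation: the torus $T_X$ is itself a subgroup, hence submonoid, of $X$ with identity $[1]$, and $[V(B)]\cap T_X=\emptyset$ since points of $T_X$ have all Cox coordinates nonzero while points of $[V(B)]$ have some $x^{\hat\sig}$ vanishing. Therefore $V_X(J)\cap T_X=(V_X(J)\cup[V(B)])\cap T_X$ is the intersection of two submonoids of the monoid $X\cup[V(B)]$, both containing $[1]$, hence is a submonoid of $T_X$. The only point needing a little care throughout is the bookkeeping distinguishing the two ambient monoids $X$ and $X\cup[V(B)]$, and checking that the well-definedness of $[P]\cdot[P']$ (independence of the chosen representatives $P,P'$ modulo $G$) is compatible with evaluating homogeneous binomials — but this is exactly what $\cA$-homogeneity of $J$ buys us, so I expect no genuine obstacle, only careful case analysis.
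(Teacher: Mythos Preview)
Your argument is correct and follows essentially the same route as the paper's proof: check $[1]\in V_X(J)$ on binomial generators, use multiplicativity of monomials to get $F(PP')=0$ from $\x^{\a}(P)=\x^{\be}(P)$ and $\x^{\a}(P')=\x^{\be}(P')$, and then read off the remaining assertions by case analysis on whether $PP'\in V(B)$. Your treatment of the ``In particular'' via $V_X(J)\cap T_X=(V_X(J)\cup[V(B)])\cap T_X$ as an intersection of submonoids is a minor rephrasing of the paper's direct closure check, but not a substantively different idea.
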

\begin{proof} If $0\neq f=\x^{\a}- \x^{\be}$ is a generator of $J$, then $f([1])=1-1=0$. So, $[1]\in V_X(J)$. If $[P],[P']\in V_X(J)$, then $f(P)=f(P')=0$. This implies that $\x^\a(P)=\x^\be(P)$ and $\x^\a(P')=\x^\be(P')$. So, $\x^{\a}(PP')=\x^\be(PP')$ yielding $f(PP')=0$, i.e. $PP'\in V(J)$. Hence $[V(J)]=V_X(J)\cup [V(B)]$ is a submonoid. If furthermore, $PP'\notin V(B)$, for all $[P],[P']\in V_X(J)$, then $[PP']\in V_X(J)$ and thus $V_X(J)$ is a submonoid. When $[P],[P']\in V_X(J)\cap T_X$, we have $PP'\notin V(B)$ and so $[PP']\in V_X(J)\cap T_X$. Therefore, $V_X(J)\cap T_X$ is a submonoid.
\end{proof}

\begin{defi} A lattice $L$ is a subgroup of $\Z^r$ and is called {\bf homogeneous} if $L \subseteq L_{\bb}=\Ker \bb=\im \phi$. Recall that every vector in $\Z^r$ is written as $\m=\m^+ -\m^-$, where $\m^+ ,\m^-\in \N^r$. Letting $F_\m=\x^{\m^+}-\x^{\m^-} $, the \textbf{lattice ideal} $I_L$ is the binomial ideal $I_L=\langle F_\m \: | \: \m \in L \rangle$ generated by special binomials $F_\m$ arising from $L\subset \Z^r$. 
\end{defi}
 
 \begin{pro}\label{p:homogeneouslattice} $L$ is homogeneous if and only if $I_L$ is homogeneous. 
\end{pro}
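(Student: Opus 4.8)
The plan is to reduce both directions to the elementary fact that the binomial $F_\m=\x^{\m^+}-\x^{\m^-}$ attached to $\m\in\Z^r$ is $\cA$-homogeneous if and only if $\deg_\cA \x^{\m^+}=\deg_\cA \x^{\m^-}$, i.e. $\bb(\m^+)=\bb(\m^-)$, equivalently $\bb(\m)=0$, that is $\m\in L_\bb=\Ker\bb$.

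For the implication $L$ homogeneous $\Rightarrow I_L$ homogeneous, suppose $L\subseteq L_\bb$. Then for every $\m\in L$ one has $\bb(\m)=0$, hence $\deg_\cA \x^{\m^+}=\bb(\m^+)=\bb(\m^-)=\deg_\cA \x^{\m^-}$, so each generator $F_\m$ of $I_L$ is $\cA$-homogeneous; being generated by homogeneous elements, $I_L$ is a homogeneous ideal. This direction is immediate.

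For the converse $I_L$ homogeneous $\Rightarrow L$ homogeneous, the subtlety is that the prescribed generators $F_\m$ need not \emph{a priori} be homogeneous, so one must deduce from $\cA$-homogeneity of $I_L$ that $\bb(\m)=0$ for every $\m\in L$. The one real obstacle is ruling out the scenario in which $F_\m$ splits into its two monomials as distinct graded components, each of which then lies in $I_L$. I would dispose of this by evaluating at the point $\mathbf 1=(1,\dots,1)\in\K^r$: since $F_\m(\mathbf 1)=1-1=0$ for all $\m\in L$, we get $I_L\subseteq M$, where $M=(x_1-1,\dots,x_r-1)$ is the maximal ideal of polynomials vanishing at $\mathbf 1$; and $M$ contains no monomial, because every monomial takes the value $1$ at $\mathbf 1$. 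Now if some $\m\in L$ had $\bb(\m)\neq0$, then $\x^{\m^+}$ and $\x^{\m^-}$ would have different $\cA$-degrees, so the $\cA$-graded component of $F_\m\in I_L$ in degree $\bb(\m^+)$ would be precisely the monomial $\x^{\m^+}$; as $I_L$ is homogeneous this monomial would lie in $I_L\subseteq M$, a contradiction. Hence $\bb(\m)=0$ for all $\m\in L$, i.e. $L\subseteq\Ker\bb=\im\phi=L_\bb$, so $L$ is homogeneous.

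As an alternative to the last step, one could extend $I_L$ to the Laurent ring $\K[x_1^{\pm1},\dots,x_r^{\pm1}]$, where it becomes $\langle \x^\m-1 : \m\in L\rangle$, a proper ideal since all its generators vanish at $\mathbf 1\in(\K^*)^r$; hence $I_L$ contains no monomial. But the evaluation-at-$\mathbf 1$ argument is shorter and self-contained, and that is the version I would write up.
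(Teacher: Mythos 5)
Your proof is correct and follows the same reduction as the paper: $I_L$ is homogeneous iff each generator $F_\m$ is homogeneous, iff $\bb(\m)=0$ for all $\m\in L$, iff $L\subseteq L_\bb$. In fact your treatment of the converse is more careful than the paper's, which simply asserts that homogeneity of $I_L$ forces homogeneity of every $F_\m$; your evaluation-at-$\mathbf 1$ argument (every element of $I_L$ vanishes at $(1,\dots,1)$, so $I_L$ contains no monomial, and hence an inhomogeneous $F_\m$ cannot have its two monomials appear as separate graded components of $I_L$) supplies exactly the justification that the paper leaves implicit.
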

\begin{proof} Since $I_L$ is generated by the set $\{F_\m \: | \: \m \in L\}$, it is homogeneous if and only if $F_\m$ is homogeneous for all $\m \in L$. But $F_\m=\x^{\m^+}-\x^{\m^-} $ is homogeneous $\iff \bb(\m^+)=\bb(\m^+)$ 
$\iff \bb(\m)=0$ $\iff \m \in L_{\bb}$.  Thus, $I_L$ is homogeneous if and only if $\m \in L_{\bb}$ for all $\m \in L$, that is, $L$ is homogeneous.
\end{proof}

\begin{coro}\label{c:latticemonoid} If $L$ is homogeneous, $V_X(I_L)\cap T_X$ is a submonoid of $T_X$.
\end{coro}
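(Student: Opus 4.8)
The plan is to simply chain together the two results immediately preceding the statement. First I would observe that the lattice ideal $I_L$ is, by its very definition, generated by the binomials $F_\m=\x^{\m^+}-\x^{\m^-}$ for $\m\in L$, hence is a binomial ideal. Next, since $L$ is assumed homogeneous, Proposition \ref{p:homogeneouslattice} tells us that $I_L$ is a homogeneous (i.e.\ $\cA$-graded) ideal. Therefore $I_L$ is a homogeneous binomial ideal, and $J:=I_L$ satisfies the hypothesis of Proposition \ref{p:monoid}.

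Then I would just invoke the last assertion of Proposition \ref{p:monoid}: for any homogeneous binomial ideal $J$, the set $V_X(J)\cap T_X$ is a submonoid of $T_X$. Specializing to $J=I_L$ yields that $V_X(I_L)\cap T_X$ is a submonoid of $T_X$, which is exactly the claim. There is essentially no obstacle here; the corollary is a direct formal consequence, and the only thing worth spelling out is the (trivial) remark that $I_L$ qualifies as a binomial ideal so that Proposition \ref{p:monoid} applies.
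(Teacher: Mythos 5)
Your proof is correct and is exactly the paper's argument: the paper also derives the corollary by combining Proposition \ref{p:homogeneouslattice} (to see $I_L$ is homogeneous) with the last assertion of Proposition \ref{p:monoid}. You have merely spelled out the (trivial) intermediate observation that $I_L$ is by definition a binomial ideal.
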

\begin{proof} This follows from Propositions \ref{p:monoid} and \ref{p:homogeneouslattice}.
\end{proof}

\begin{defi}\label{D:VanIdeal} 
For $Y\subseteq X$, we define the vanishing ideal $I(Y)$ of $Y$ to be the ideal generated by homogeneous polynomials vanishing on $Y$. 
\end{defi}

\begin{pro}\label{p:binom} Let $Y \subseteq X$. If $Y$ is a submonoid of $X\cup [V(B)]$, then $I(Y)$ is a binomial ideal. If $Y \subseteq T_X$ is a submonoid, $I(Y)$ is a lattice ideal. 
\end{pro}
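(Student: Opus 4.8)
The plan is to prove both claims with a single induction, by showing that every $\cA$-homogeneous $f\in I(Y)$ lies in the subideal $I'\subseteq I(Y)$ generated by the binomials (here: polynomials with at most two terms) that vanish on $Y$; since $I(Y)$ is generated by its homogeneous elements, this will give $I(Y)=I'$, hence that $I(Y)$ is a binomial ideal. I would write $f=\sum_{i=1}^{s}c_i\x^{\a_i}$ with the $\a_i\in\N^r$ distinct and all $c_i\in\K^*$, and induct on $s$; the cases $s\le 1$ are immediate (for $s=1$, a monomial in $I(Y)$, a degenerate binomial), so assume $s\ge 2$.

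The tool to exploit is the monoid law on $Y$. For $[Q]\in Y$, introduce the ``twisted'' polynomial $f_Q:=\sum_i c_i\x^{\a_i}(Q)\,\x^{\a_i}$; since $f_Q(P)=f(PQ)$ and $[PQ]\in Y$ for every $[P]\in Y$, also $f_Q\in I(Y)$. (This $f_Q$, like the other auxiliary polynomials below, is defined only up to a common nonzero scalar, because $\a_i-\a_j\in L_\bb=\Ker\bb$ — $f$ being $\cA$-homogeneous — makes $\x^{\a_i-\a_j}$ $G$-invariant; this ambiguity is harmless.) If some $[Q]\in Y$ is \emph{mixed}, i.e. $\x^{\a_i}(Q)=0$ for some $i$ while $\x^{\a_j}(Q)\ne 0$ for some $j$, then $f_Q$ has between $1$ and $s-1$ terms, and $f_Q-\x^{\a_j}(Q)f$ also lies in $I(Y)$ with at most $s-1$ terms (its $\x^{\a_j}$-terms cancel); by induction both are in $I'$, whence $f=\x^{\a_j}(Q)^{-1}\big(f_Q-(f_Q-\x^{\a_j}(Q)f)\big)\in I'$. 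Otherwise no point of $Y$ is mixed, so $Y=Y_+\sqcup Y_0$, where $Y_+:=\{[Q]\in Y:\x^{\a_i}(Q)\ne 0\text{ for all }i\}$ is a submonoid containing $[1]$ and every $\x^{\a_i}$ vanishes on $Y_0$. On $Y_+$ the Laurent monomials $\mu_i:=\x^{\a_i-\a_1}$ are well-defined monoid homomorphisms $Y_+\to\K^*$ with $\sum_i c_i\mu_i\equiv 0$, so Dedekind's lemma on linear independence of characters of the monoid $Y_+$ forces $\sum_{i\in C}c_i=0$ on each block $C$ of the partition of $\{1,\dots,s\}$ by ``$\x^{\a_i}=\x^{\a_j}$ on $Y_+$''. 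Since all $c_i\ne 0$, some block has two elements $i_1\ne i_2$, and then $\x^{\a_{i_1}}-\x^{\a_{i_2}}$ vanishes on $Y_+$ by construction and on $Y_0$ because both monomials do, hence is a genuine binomial in $I(Y)$; subtracting $c_{i_1}(\x^{\a_{i_1}}-\x^{\a_{i_2}})$ from $f$ leaves an element of $I(Y)$ with at most $s-1$ terms, and induction finishes.

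For the second statement, assume $Y\subseteq T_X$. Then each $x_j$ is a non-zero-divisor on $S/I(Y)$ — a homogeneous $g$ with $x_jg\in I(Y)$ already vanishes on $Y$, since torus points have all coordinates nonzero — so in the two-term generators $\x^{\a}-\x^{\be}$ of $I(Y)$ just obtained one may cancel the common factor $\x^{\min(\a,\be)}$ and stay in $I(Y)$, producing a generating set of pure-difference binomials $F_\m=\x^{\m^+}-\x^{\m^-}$ with $\m\in L_\bb$ (by homogeneity). Finally $\cL:=\{\m\in L_\bb:F_\m\in I(Y)\}$ is a sublattice of $\Z^r$: viewing $\x^{\m}$ as the $G$-invariant function $\x^{\m^+}/\x^{\m^-}$ on $T_X$, membership $F_\m\in I(Y)$ is exactly ``$\x^{\m}\equiv 1$ on $Y$'', a condition stable under negating $\m$ and adding exponents. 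Hence $I(Y)=I_\cL$ with $\cL\subseteq L_\bb$, and Proposition~\ref{p:homogeneouslattice} identifies $I(Y)$ as a homogeneous lattice ideal.

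The part I expect to be genuinely delicate is the ``no mixed point'' case: one must produce an \emph{honest} two-term binomial inside $I(Y)$, not just one vanishing on the torus part $Y_+$, and that is precisely where Dedekind's lemma — applied to the commutative monoid $Y_+$, which need not be a group — is used together with the fact that $Y_0$ annihilates every monomial of $f$. A minor nuisance throughout is the $G$-action: $f_Q$ and the $\mu_i$ are only well defined up to a common nonzero scalar, and one must observe that $\cA$-homogeneity of $f$ renders this immaterial. In the torus case everything degenerates ($Y_0=\varnothing$, $Y_+=Y$) and only the standard character-theoretic argument is needed.
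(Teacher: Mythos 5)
Your argument is correct, but it takes a genuinely different route from the paper's in both halves. For the binomial-ideal part, the paper avoids your mixed/unmixed case split entirely: it regards the monomials of a homogeneous $f\in I(Y)$ as characters of the monoid $\mathcal{Y}=\{P\in\K^r : [P]\in Y\}$ with values in the multiplicative \emph{monoid} $(\K,\cdot,1)$ --- zero values allowed --- and invokes the corresponding form of Dedekind's independence theorem (the usual Artin induction goes through verbatim for monoid homomorphisms into $(\K,\cdot)$), so two monomials of $f$ must already coincide as functions on all of $\mathcal{Y}$ and one subtracts a genuine binomial in one step. Your version, which trusts only the $\K^*$-valued lemma and compensates with the translation twist $f\mapsto f_Q$ at mixed points and the decomposition $Y=Y_+\sqcup Y_0$, is more elaborate but equally valid; it buys independence from the monoid-valued form of the lemma at the cost of extra bookkeeping. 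One small point to tighten: a nonzero monomial can never lie in $I(Y)$, since every element of $I(Y)$ vanishes on $\mathcal{Y}$ and $[1]\in Y$; so your ``$s=1$: degenerate binomial'' base case is vacuous, and in the mixed case $f_Q$ automatically has at least two terms --- this matters, because an honest monomial admitted into your subideal $I'$ would not be a difference of two monomials. For the lattice-ideal part, the paper proves $(I(Y):x_j)=I(Y)$ for arbitrary (not only homogeneous) polynomials by splitting into homogeneous components and then cites Eisenbud--Sturmfels, Corollary 2.5, whereas you construct the lattice $\cL=\{\m\in L_{\bb} : \x^{\m}\equiv 1 \text{ on } Y\}$ explicitly and verify $I(Y)=I_{\cL}$ by cancelling common monomial factors; your route is more self-contained and has the advantage of exhibiting the lattice, while the paper's is shorter.
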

\begin{proof} If $Y$ is a submonoid of $X\cup [V(B)]$, the set $\mathcal{Y}=\{P\in \K^r \:|\: [P]\in Y\}$ is a submonoid of $\K^r$. Take a homogeneous polynomial $f=\lambda_1 \x^{\a_1}+\cdots+\lambda_k \x^{\a_k}$ from $ I(Y)$ with exactly $k$ terms. As $[1]\in Y$, $f$ can not be a monomial, i.e. $k>1$. Since monomials are characters from $\mathcal{Y}$ to the multiplicative monoid $(\K,.,1)$, it follows from Dedekind's Theorem that the monomials of $f$ can not be distinct characters as in the proof of \cite[Theorem 3.2]{TV}. If, say, $\x^{\a_1}$ and $\x^{\a_2}$ are the same as characters, then $\x^{\a_1}-\x^{\a_2}\in I(Y)$. Thus $f-\lambda_1(\x^{\a_1}-\x^{\a_2})$ lies in $I(Y)$ and has exactly $k-1$ terms. By induction on $k$, $f$ must be written as an algebraic combination of binomials, and hence $I(Y)$ is a binomial ideal. For the second part, take a polynomial $f$ of $S$ such that $x_jf\in I(Y)$ for any $j=1,\dots,r$. By \cite[Corollary 2.5]{binomial}, it is enough to show that $f\in I(Y)$. Let $f=\bigoplus\limits_{\alpha\in {\N\beta}}f_\alpha$ be the expansion of $f$ as the sum of its homogeneous components where $f_\alpha$ has total degree $\alpha$. Then $x_j{f}=\bigoplus\limits _{\alpha\in {\N\beta}}x_j{f_\alpha}$ and $x_j{f_\alpha}$ is homogeneous of total degree $\alpha+ \beta_j$. As $I(Y)$ is a homogeneous ideal, $x_j{f_\alpha}\in I(Y)$.  So, $(x_jf_\alpha)(P)=p_jf_{\alpha}(P)=0$ for any element $[P] \in Y$. Since $P\in(\K^*)^r$, $p_j\neq 0$, and so $f_{\alpha}(P)=0$. This yields that $f_\alpha\in I(Y)$ for all $\alpha$, consequently $f\in I(Y)$ completing the proof.
\end{proof}

\begin{rema} It is not clear if the hypothesis of Proposition \ref{p:binom} can be replaced with the weaker hypothesis that $Y\cup [V(B)]$ is a submonoid of $X \cup [V(B)]$. When $X$ is a projective space or weighted projective space $V(B)=\{0\}$, so this can be done using the fact that if $f\in I(Y)$ is a homogeneous polynomial then it also vanishes on $V(B)$. See \cite[Theorem 3.2]{TV} for the case of projective space. 
\end{rema}

\begin{lm} \label{L:Zariski}We have the following basic properties.
\begin{enumerate}
\item $\emptyset=V_X(1)$ and $X=V_X(0)$.\\
\item $V_X(J_1)\cup V_X(J_2)=V_X(J_1J_2)$ and $\displaystyle \bigcap_{i} V_X(J_i)=V_X(\bigcup_i J_i)$.\\
\item $Y_1\subseteq Y_2 \implies I(Y_2)\subseteq I(Y_1)$ .\\
\item $J_1\subseteq J_2 \implies V_X(J_2)\subseteq V_X(J_1)$. \\
\item If $\bar{Y}$ is the Zariski closure of $Y$, then $ I(Y)= I(\bar{Y})$ .\\
\item $\bar{Y}= V_X(I(Y))$. \\
\end{enumerate}
\end{lm}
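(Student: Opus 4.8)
The plan is to prove the six items of Lemma~\ref{L:Zariski} by direct unwinding of the definitions of $V_X(\cdot)$ and $I(\cdot)$, handling the harder two ((5) and (6)) after the purely formal ones. Items (1)--(4) are immediate: $(1)$ holds since no $[P]\in X$ satisfies $1=0$ and every $[P]$ satisfies $0=0$; for $(2)$, the first identity follows because a product $F_1F_2$ with $F_1\in J_1$, $F_2\in J_2$ vanishes at $[P]$ iff $F_1(P)=0$ or $F_2(P)=0$ (here one uses that the homogeneous generators of $J_1J_2$ are products of homogeneous generators, and conversely that $J_1\subseteq J_1J_2$-radical-wise — more precisely $V_X(J_1J_2)=V_X(J_1\cap J_2)=V_X(J_1)\cup V_X(J_2)$ by the standard argument), and the second is just the observation that $[P]$ kills every element of $\bigcup_i J_i$ iff it kills every element of each $J_i$; $(3)$ and $(4)$ are the evident order-reversing properties of $I(\cdot)$ and $V_X(\cdot)$.

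For $(5)$, I would argue: since $Y\subseteq\bar Y$, item (3) gives $I(\bar Y)\subseteq I(Y)$. Conversely, any homogeneous $F\in I(Y)$ vanishes on $Y$, hence the zero set $V_X(\langle F\rangle)$ is a Zariski-closed subset of $X$ containing $Y$, so it contains $\bar Y$; that is, $F$ vanishes on $\bar Y$. Since $I(Y)$ is generated by such homogeneous $F$, we get $I(Y)\subseteq I(\bar Y)$, and hence equality. The only subtlety is that $X$ is a quotient $(\cK\setminus V(B))/G$ rather than an affine or projective space, so ``Zariski-closed in $X$'' should be read via the quotient topology / the subsets of the form $V_X(J)$; with that reading $V_X(\langle F\rangle)$ is by definition closed, so the argument goes through.

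For $(6)$, first note $Y\subseteq V_X(I(Y))$ trivially, and $V_X(I(Y))$ is Zariski-closed, so $\bar Y\subseteq V_X(I(Y))$. For the reverse inclusion I would use (5): if $[Q]\in V_X(I(Y))$ then every homogeneous $F\in I(Y)=I(\bar Y)$ vanishes at $[Q]$, so $[Q]$ lies in the closed set $V_X(I(\bar Y))$; it then suffices to know $V_X(I(\bar Y))=\bar Y$, i.e. that a Zariski-closed set $Z=\bar Y$ satisfies $V_X(I(Z))=Z$. This is the one place where something genuinely must be invoked rather than unwound: it is the statement that the closed sets of $X$ are precisely the sets $V_X(J)$ and that $I(V_X(J))\supseteq J$ with $V_X(I(V_X(J)))=V_X(J)$, which holds formally from $V_X(I(V_X(J)))\subseteq V_X(J)$ (as $J\subseteq I(V_X(J))$ and $V_X$ is order-reversing) together with $V_X(J)\subseteq V_X(I(V_X(J)))$ (trivial). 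Applying this with $Z=V_X(J_0)$ for any defining ideal $J_0$ of the closed set $\bar Y$ yields $V_X(I(\bar Y))=\bar Y$, completing (6).

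The main obstacle is purely expository rather than mathematical: being careful that $X$ is a geometric quotient, so that the Zariski topology on $X$ is the one whose closed sets are exactly the $V_X(J)$ for homogeneous ideals $J$ (equivalently, images of $G$-invariant closed subsets of $\cK\setminus V(B)$). Once that convention is fixed, every item is a one-line consequence of the order-reversing Galois-connection formalism between homogeneous ideals and closed subsets, and no finiteness, radicality, or field hypothesis is needed here.
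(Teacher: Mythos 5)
Your proposal is correct and follows essentially the same route as the paper, which proves only the nontrivial inclusion $V_X(J_1J_2)\subseteq V_X(J_1)\cup V_X(J_2)$ by the identical homogeneous-element argument and dismisses the remaining items as "similar or obvious," all resting on the fact that homogeneous ideals are generated by homogeneous polynomials and the order-reversing correspondence between $I(\cdot)$ and $V_X(\cdot)$. Your added care about the Zariski topology on the quotient $X$ being the one whose closed sets are the $V_X(J)$ matches the convention the paper fixes immediately after the lemma.
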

\begin{proof} These can be proven using the fact that homogeneous ideals are generated by homogeneous polynomials. For instance, if $[P]\in V_X(J_1J_2)\setminus V_X(J_1)$, then it suffices to show that $G(P)=0$ for every homogeneous $G\in J_2$. Since, there is some homogeneous $F\in J_1$ with $F(P)\neq 0$ and $F(P)G(P)=0$, this follows at once. So, $V_X(J_1J_2) \subseteq V_X(J_1)\cup V_X(J_2)$. The rest is either similar or obvious.
\end{proof}

We define Zariski topology on $X$ by taking closed sets to be the zero loci $V_X(J)$ for homogeneous ideals $J$. 

We close this section with establishing the following correspondence.

\begin{tm} \label{t:subgpsLattices}Let $Y \subseteq T_X$ and $\K=\F_q$. Then, $I(Y)$ is a lattice ideal if and only if $Y$ is a subgroup.
\end{tm}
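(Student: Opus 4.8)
The plan is to prove both implications separately, using the results already established for submonoids. For the direction ``$Y$ a subgroup $\implies I(Y)$ a lattice ideal,'' I would first observe that a subgroup of $T_X$ is in particular a submonoid of $T_X$, so Proposition \ref{p:binom} applies directly and gives that $I(Y)$ is a lattice ideal. Thus this implication is essentially immediate from what has already been proved, and requires no finiteness of the field; the hypothesis $\K = \F_q$ is only needed for the converse.

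For the converse, ``$I(Y)$ a lattice ideal $\implies Y$ a subgroup,'' I would argue as follows. By Lemma \ref{L:Zariski}(5)--(6) we may replace $Y$ by its Zariski closure $\bar Y = V_X(I(Y))$ without changing $I(Y)$; since $I(Y)=I_L$ for a (necessarily homogeneous, by Proposition \ref{p:homogeneouslattice}) lattice $L$, we have $\bar Y = V_X(I_L)\cap T_X$ (the intersection with $T_X$ because $Y\subseteq T_X$), which by Corollary \ref{c:latticemonoid} is a submonoid of $T_X$. So it remains to show that $\bar Y$ equals $Y$ and that a submonoid of $T_X$ that is closed and cut out by a lattice ideal is actually a group. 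The key point is that $T_X \cong (\F_q^*)^n$ is a \emph{finite} abelian group, and any submonoid of a finite group is automatically a subgroup: for $[P]$ in the submonoid, the cyclic monoid generated by $[P]$ is finite, hence contains $[1]$ and an inverse of $[P]$. Therefore $V_X(I_L)\cap T_X$ is a subgroup of $T_X$. Finally, to conclude $Y$ itself is a subgroup I would check $Y = V_X(I(Y))\cap T_X$: the inclusion $\subseteq$ is clear, and for $\supseteq$ one uses that over $\F_q$ a point $[P]\in T_X$ lying in $V_X(I(Y))$ means every homogeneous polynomial vanishing on $Y$ vanishes at $[P]$; since $Y$ is a finite set of points in the torus, $I(Y)$ already ``sees'' $Y$ exactly, i.e. $Y$ is Zariski closed in $T_X$ (a finite subset of points of an affine-like variety over a field is closed). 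Hence $Y = \bar Y \cap T_X$ is a subgroup.

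Concretely, to make the last step rigorous I would note that since $T_X$ is finite, for each point $[Q]\in T_X\setminus Y$ there is a homogeneous polynomial in $S$ vanishing on all of $Y$ but not at $[Q]$ — this follows from a standard interpolation/separation argument in the Cox ring, using that points of $T_X$ are separated by the homogeneous coordinates and that one can homogenize; thus $[Q]\notin V_X(I(Y))$, giving $V_X(I(Y))\cap T_X \subseteq Y$. Combined with the reverse containment from Lemma \ref{L:Zariski} and the group property above, we get that $Y = V_X(I_L)\cap T_X$ is a subgroup of $T_X$.

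The main obstacle I anticipate is this separation/interpolation step, i.e. verifying that $Y$ is Zariski closed in $T_X$ so that $\bar Y \cap T_X = Y$ rather than something strictly larger. The monoid-to-group passage is easy once finiteness is in hand, and the forward implication is a direct citation of Proposition \ref{p:binom}; but one must be careful that passing to the closure (needed to invoke Corollary \ref{c:latticemonoid}) does not enlarge $Y$ within the torus. I expect the paper handles this either by an explicit argument that finite subsets of $T_X$ are closed in the Zariski topology defined above, or by directly checking the monoid axioms for $Y$ using Dedekind-independence of characters as in the proof of Proposition \ref{p:binom}.
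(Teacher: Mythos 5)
Your proposal is correct and follows essentially the same route as the paper: the forward implication is a direct citation of Proposition \ref{p:binom}, and the converse combines the submonoid structure of $V_X(I_L)\cap T_X$ (Proposition \ref{p:monoid}/Corollary \ref{c:latticemonoid}) with the fact that a submonoid of a finite group is a subgroup. The only difference is that you are more explicit about the step $Y=\bar Y\cap T_X$ (finite subsets of $T_X$ being Zariski closed), which the paper simply asserts as $V_X(I(Y))=\bar Y=Y$ without elaboration.
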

\begin{proof} If $I(Y)$ is a lattice ideal then $V_X(I(Y))\cap T_X$ is a submonoid of $T_X$ by Proposition \ref{p:monoid}. Since $V_X(I(Y))=\bar{Y}=Y$ lies in $T_X$ and $\K$ is finite $Y$ is a subgroup. The converse follows directly from Proposition \ref{p:binom}.
\end{proof}

\section{Finite submonoids of the torus}

In this section we completely characterize finite submonoids of the torus $T_X$ when $\K$ is a finite field or an algebraically closed field by showing that they are exactly those that are parameterized by Laurent monomials. This was done when $X$ is a projective space over a finite field by \cite{NPV}  and over an algebraically closed field by \cite{TV}. We give an algorithm producing these monomials when $\K$ is a finite field and the submonoid corresponds to a homogeneous lattice ideal.

\begin{defi} Let $H$ be a subset of $\K^*$ and $Q$ be an $s\times r$ integer matrix with entries $q_{ij}$. Then the parameterized set corresponding to $Q$ and $H$ is defined as follows:
 $$Y_{Q,H}=\{[t_1^{q_{11}}\cdots t_s^{q_{s1}}:\cdots :t_1^{q_{1r}}\cdots t_s^{q_{sr}}]: t_1,\dots,t_s \in H\}.$$
\end{defi}

\begin{tm}\label{t:parametrizingsubmonoids} Let $\K$ be a finite field or an algebraically closed field. Then the following are equivalent:

\begin{enumerate}
\item $Y$ is a \textbf{finite} submonoid of $T_X$.\\
\item $Y =Y_{Q,H}$ for a \textbf{finite} submonoid $H$ of $\K^*$.
\end{enumerate}
\end{tm}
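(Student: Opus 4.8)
The plan is to prove the equivalence by establishing each implication separately, with the bulk of the work in showing $(1)\Rightarrow(2)$.

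\medskip

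\textbf{The direction $(2)\Rightarrow(1)$.} First I would check that any parameterized set $Y_{Q,H}$ with $H$ a finite submonoid of $\K^*$ is a finite submonoid of $T_X$. Finiteness is immediate since $H$ is finite and the parameterization is a surjection from $H^s$. To see that it lands in $T_X$ (not in $[V(B)]$), note that each coordinate $t_1^{q_{1j}}\cdots t_s^{q_{sj}}$ is a product of units in $\K^*$, hence nonzero; so no point of $Y_{Q,H}$ meets $V(B)$. The identity $[1]=[1:\cdots:1]$ is obtained by taking $t_1=\cdots=t_s=1$ (which lies in $H$ since $H$ is a submonoid of $\K^*$, hence contains $1$). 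Closure under multiplication follows because coordinatewise multiplication of two such points corresponds to multiplying the parameters $t_i$ componentwise, and $H$ is closed under multiplication. Finally one must check this product is well defined in $T_X$, i.e. independent of the representatives, which follows from the definition of $T_X=(\K^*)^r/G$ since the map is induced by a homomorphism on the level of tori. Thus $Y_{Q,H}$ is a finite submonoid of $T_X$.

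\medskip

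\textbf{The direction $(1)\Rightarrow(2)$.} This is the heart of the matter. Let $Y$ be a finite submonoid of $T_X$. The strategy is to pass to the cover $(\K^*)^r$ via the exact sequence $\mathfrak{P}^*$. Consider the preimage $\pi^{-1}(Y)\subseteq(\K^*)^r$: since $\pi$ is a group homomorphism with kernel $G$, the preimage $\tilde Y:=\pi^{-1}(Y)$ is a subgroup of $(\K^*)^r$ containing $G$, provided $Y$ is actually a subgroup of $T_X$. Here I would invoke the fact, valid when $\K$ is finite or algebraically closed, that a finite submonoid of a torus $(\K^*)^n$ is automatically a subgroup: over a finite field every element has finite order, and over an algebraically closed field a finite submonoid of $\K^*$ consists of roots of unity, so in both cases the monoid is a finite subgroup. (This is exactly the kind of argument already used in the proof of Theorem~\ref{t:subgpsLattices}.) So $\tilde Y$ is a finite-index-complement subgroup — more precisely, $\tilde Y/G\cong Y$ is finite. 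Now I want to describe $\tilde Y$ as a parameterized set. Since $\tilde Y$ is a subgroup of $(\K^*)^r$ and $\K$ is finite or algebraically closed, one can write $\tilde Y$ as the image of a homomorphism from a power of a cyclic group: choose generators of $\tilde Y$ (or of a suitable covering group $H^s$) and express each as a tuple of powers of a fixed root of unity. Concretely, pick a finite cyclic subgroup $H\subseteq\K^*$ large enough that $\tilde Y\subseteq H^r$; then $\tilde Y$, being a subgroup of the finite abelian group $H^r$, is the image of some integer matrix map $H^s\to H^r$, $\t\mapsto(\t^{\q_1},\dots,\t^{\q_r})$ where $\q_j$ are the columns of an $s\times r$ integer matrix $Q$. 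Composing with $\pi$ gives $Y=\pi(\tilde Y)=Y_{Q,H}$, because $\pi$ acts as a monomial map and monomial maps compose to monomial maps. The finite submonoid $H$ of $\K^*$ is then the chosen cyclic group.

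\medskip

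\textbf{Main obstacle.} The delicate point is the passage from "$\tilde Y$ is a subgroup of $H^r$" to "$\tilde Y$ is the image of an integer matrix map $H^s\to H^r$". This requires the structure theorem for finite abelian groups applied to the subgroup $\tilde Y\le H^r\cong(\Z/m\Z)^r$: one must show every subgroup of $(\Z/m\Z)^r$ is the image of a $\Z$-linear map from some $(\Z/m\Z)^s$, equivalently the image mod $m$ of an integer matrix. This is true (take generators of the subgroup, lift them to $\Z^r$, and assemble into the rows/columns of $Q$), but it needs to be stated carefully, and one must make sure the relation $Y=\pi(\tilde Y)$ rather than $\pi^{-1}(Y)=\tilde Y$ is used correctly — that is, that $Y_{Q,H}=\pi(\tilde Y)$ genuinely recovers $Y$ and not something larger. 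A secondary subtlety is ensuring the whole argument is uniform in the two cases ($\K$ finite vs.\ algebraically closed): in the algebraically closed case $\K^*$ is not finite, so one genuinely needs the finiteness of $Y$ to conclude that the relevant elements are roots of unity lying in a common finite cyclic $H$.
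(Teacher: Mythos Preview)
Your $(2)\Rightarrow(1)$ direction is fine and matches the paper's argument.

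In $(1)\Rightarrow(2)$ there is a genuine gap in the algebraically closed case. You set $\tilde Y=\pi^{-1}(Y)$ and then propose to ``pick a finite cyclic subgroup $H\subseteq\K^*$ large enough that $\tilde Y\subseteq H^r$.'' But $\tilde Y$ contains $G=\Ker\pi$, and when $\K$ is algebraically closed $G$ is an infinite group (it has a factor isomorphic to $(\K^*)^{r-n}$). So no such finite $H$ exists, and the structure-theorem step you describe cannot be applied to $\tilde Y$. Your closing remark about ``the relevant elements being roots of unity'' is aimed at the elements of $Y$, but the object you actually need to embed in $H^r$ is $\tilde Y$, and that fails. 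Over a finite field your approach is salvageable since $\tilde Y$ is then automatically finite.

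The paper circumvents this by never passing to the full preimage. It writes $Y=\langle[P_1]\rangle\cdots\langle[P_s]\rangle$ with $[P_i]$ of order $c_i$, and then, in the algebraically closed case, uses the existence of $c_i$-th roots in $\K^*$ to \emph{adjust the representatives} $P_i$ by elements of $G$ so that each coordinate $p'_{ij}$ satisfies $(p'_{ij})^{c_i}=1$; only then does one get coordinates lying in a finite cyclic $H$. In effect this produces a finite lift of $Y$ into $(\K^*)^r$, which is exactly what your argument is missing. If you want to repair your route, you would need to argue that the extension $1\to G\to\tilde Y\to Y\to 1$ splits (for instance because $G$ is divisible when $\K$ is algebraically closed), obtain a finite section $Y'\subseteq(\K^*)^r$, and then apply your matrix argument to $Y'$ rather than to $\tilde Y$.
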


\begin{proof} Suppose that $(1)$ holds. In this case $Y$ is a finite subgroup as being a finite sumbonoid of a group. Thus $Y$ is a product $\la [P_1] \ra \times\cdots \times \langle [P_s] \rangle$ of cyclic subgroups generated by points $[P_1],\dots, [P_s]\in Y$ of orders, say,  $c_1,\dots,c_s$. In this case, for every $i=1,\dots,s$, we have $[P_i]^{c_i}=[1]=G$. If $\K$ is finite, we define $H$ to be the subgroup generated by the coordinates $p_{ij}$ of $P_1,\dots, P_s$. Then $H$ is a finite subgroup of $\K^*$. If $\K$ is algebraically closed, then we define $H$ to be the subgroup generated by all $x\in \K$ such that $x^{c_i}=1$, for some $i=1,\dots,s$. We show that $H$ is non-empty. Since $(1,\dots,1)\in G=[P_i^{c_i}]$ and $G$ is parameterized by the columns of $\bb$, it follows that $(1,\dots,1)=(\mathbf{t}^{\bb_1}p_{i1}^{c_i},\dots,\mathbf{t}^{\bb_r}p_{ir}^{c_i})$. So, $\mathbf{t}^{\bb_j}p_{ij}^{c_i}=1$, for all $j=1,\dots,r$. Let ${\mu}_{jk}\in \K^*$ be such that ${\mu}_{jk}^{c_i}=t_k$, for $j=1,\dots,r$ and $k=1,\dots,r-n$. This leads to $\mu_j^{c_i\bb_j}=\mathbf{t}^{\bb_j}$, where $\mu_j=({\mu}_{i1},\dots,{\mu}_{ik})$. Thus, we have $(\mu_j^{\bb_j}p_{ij})^{c_i}=1$, for all $j=1,\dots,r$. If $p'_{ij}=\mu_j^{\bb_j}p_{ij}$, then $p'_{ij}\in H$ and $[P'_i]=[P_i]$. In order to unify the rest of the proof for both cases that $\K$ is finite and algebraically closed, by abusing the notation, let us assume that $p_{ij}\in H$. This implies that $H$ is a non-empty finite subgroup of $\K$. Since $\K$ is a field and $H$ is finite, $H$ must be cyclic, i.e., $H=\la \eta \ra$, for some $\eta\in \K^*$. Then $p_{ij}=\eta^{q_{ij}}$ for some $q_{ij}$. Let $Q$ be the $s\times r$ integer matrix with entries $q_{ij}$ and let $$Y_{Q,H}=\{[t_1^{q_{11}}\cdots t_s^{q_{s1}}:\cdots :t_1^{q_{1r}}\cdots t_s^{q_{sr}}]: t_1,\dots,t_s \in H\}.$$

Then $$\begin{array}{lcl}
[P]\in Y &\iff& [P]=[P_1]^{i_1}\cdots[P_s]^{i_s}=[p_{11}^{i_1}\cdots p_{s1}^{i_s}:\dots:p_{1r}^{i_1}\cdots p_{sr}^{i_s}]\\
&\iff&[P]=[\eta^{i_1q_{11}}\cdots \eta^{i_sq_{s1}}:\dots:\eta^{i_1q_{1r}}\cdots \eta^{i_sq_{sr}}]
\end{array}
$$
Since $t_j\in H \iff t_j=\mu^{i_j}$, for all $j=1,\dots,s$, it follows that  $[P]\in Y \iff [P]\in Y_{Q,H}$. Therefore, $(2)$ holds.

Suppose now that $(2)$ holds. Then, taking $t_i=1\in H$, for all $i=1,\dots,s$, we see that $[1]\in Y$. The product of two elements $[P]=[t_1^{q_{11}}\cdots t_s^{q_{s1}}:\cdots :t_1^{q_{1r}}\cdots t_s^{q_{sr}}]$ and $[P']= [{t'}_1^{q_{11}}\cdots {t'}_s^{q_{s1}}:\cdots :{t'}_1^{q_{1r}}\cdots {t'}_s^{q_{sr}}]$ of $Y$ lies in $Y$ since $t_it'_i\in H$ and 
$$[P][P']=[(t_1 {t'}_1)^{q_{11}}\cdots (t_s {t'}_s)^{q_{s1}}:\cdots :(t_1 {t'}_1)^{q_{1r}}\cdots (t_s {t'}_s)^{q_{sr}}].$$ Therefore, $(1)$ holds, as $Y$ is finite whenever $H$ is so.
\end{proof}

 \begin{lm}\label{l:varietyoflattice}  If $L$ is a homogeneous lattice with basis $\mathbf b_1,\dots,\mathbf b_{\ell}$ then $$V_X(I_L)\cap T_X=\bigcap_{i=1}^{\ell} V_X(F_{\be_i})\cap T_X.$$
\end{lm}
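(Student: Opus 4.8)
The plan is to prove the two inclusions separately, with the nontrivial direction being the containment $V_X(I_L)\cap T_X \subseteq \bigcap_{i=1}^{\ell} V_X(F_{\be_i})\cap T_X$ — wait, that one is actually trivial since each $F_{\be_i}\in I_L$, so by part (4) of Lemma~\ref{L:Zariski} we get $V_X(I_L)\subseteq V_X(\langle F_{\be_i}\rangle)=V_X(F_{\be_i})$ for every $i$, hence the intersection. So the real content is the reverse inclusion $\bigcap_{i=1}^{\ell} V_X(F_{\be_i})\cap T_X \subseteq V_X(I_L)\cap T_X$.

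For that direction, I would take a point $[P]\in T_X$ with $F_{\be_i}(P)=0$ for all $i=1,\dots,\ell$, and show that $F_{\m}(P)=0$ for every $\m\in L$, which gives $[P]\in V_X(I_L)$ since the $F_\m$ generate $I_L$. The key observation is that on the torus $T_X$, the coordinates of $P$ are all nonzero, so the condition $F_{\be_i}(P)=\x^{\be_i^+}(P)-\x^{\be_i^-}(P)=0$ is equivalent to $\x^{\be_i}(P):=\prod_j p_j^{(\be_i)_j}=1$ (interpreting this Laurent-monomial evaluation using $p_j\neq 0$). The map $\m\mapsto \x^{\m}(P)$ is then a group homomorphism $\Z^r\to \K^*$, and it kills each basis element $\be_i$, hence kills all of $L=\langle \be_1,\dots,\be_\ell\rangle$. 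Therefore $\x^{\m}(P)=1$ for every $\m\in L$, which upon clearing denominators (again legitimate because $P$ has nonzero coordinates) gives $\x^{\m^+}(P)=\x^{\m^-}(P)$, i.e. $F_{\m}(P)=0$.

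Concretely the steps are: (i) note $F_{\be_i}\in I_L$ and invoke Lemma~\ref{L:Zariski}(4) to get $\subseteq$; (ii) fix $[P]\in T_X$ vanishing on all $F_{\be_i}$ and lift to a representative $P\in(\K^*)^r$ (well-definedness of the conditions under the $G$-action is automatic since $I_L$ and each $\langle F_{\be_i}\rangle$ are homogeneous, cf. the reasoning in Proposition~\ref{p:binom}); (iii) translate vanishing of binomials at $P$ into the multiplicative identities $\x^{\be_i}(P)=1$; (iv) use that $\m\mapsto\x^{\m}(P)$ is a homomorphism from $\Z^r$ to $\K^*$ to propagate these identities from the basis to all of $L$; (v) translate back to conclude $F_\m(P)=0$ for all $\m\in L$, hence $[P]\in V_X(I_L)\cap T_X$.

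I do not expect a serious obstacle here; the only mild subtlety is making sure the passage between the "polynomial" form $F_\m=\x^{\m^+}-\x^{\m^-}$ and the "Laurent" form $\x^\m$ is justified, and this is exactly where the restriction to $T_X$ (all coordinates invertible) is used — off the torus a binomial can vanish without the corresponding Laurent monomial relation holding, which is why the statement is intersected with $T_X$. I would state that translation once, clearly, and then the homomorphism argument is a one-line finish.
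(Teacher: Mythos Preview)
Your proposal is correct and follows essentially the same approach as the paper: both use that on $T_X$ the vanishing $F_{\be_i}(P)=0$ is equivalent to $\x^{\be_i}(P)=1$, then extend this multiplicatively from the basis $\be_1,\dots,\be_\ell$ to all of $L$ to conclude $F_\m(P)=0$ for every $\m\in L$, with the reverse inclusion declared straightforward. Your write-up is slightly more explicit (framing $\m\mapsto\x^\m(P)$ as a homomorphism and citing Lemma~\ref{L:Zariski}(4) for the easy direction), but the argument is the same.
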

\begin{proof} Take a point $[P]$ from the intersection $\bigcap_{i=1}^{\ell} V_X(F_{\be_i})\cap T_X$. Then, $F_{\be_i}(P)=\x^{{\be_i}^+}(P)-\x^{{\be_i}^-}(P)=0$, for all $i=1,\dots,\ell$. Since $[P]\in T_X$, its coordinates are not zero, and thus $\x^{\be_i}(P)=1$, for all $i=1,\dots,\ell$. As we have $\m=c_1\be_1+\cdots+c_{\ell}\be_{\ell}$, for any $\m\in L$, it follows that $\x^{\m}(P)=1$. So, $F_\m(P)=0$, for any $\m\in L$. Hence, $[P]\in V_X(I_L)\cap T_X$. The other inclusion is straightforward.
\end{proof}

For finite fields Theorem \ref{t:parametrizingsubmonoids} has a more precise form given below.
 \begin{pro}\label{p:parameterisingV_L}  Let $\K=\F_q$ be a finite field. Let $L\subset \Z^r$ be a homogeneous lattice of rank $\ell$ with basis vectors $\mathbf b_1,\dots,\mathbf b_{\ell}$ and $B_L$ be a matrix  with rows of the form $[\mathbf b_j \; \mathbf (q-1) \mathbf e_j]$ where $\mathbf e_j$ is the standard basis vector of $\Z^{\ell}$. If $A_L$ is an integer matrix of size $(r+\ell)\times r$ such that $\Ker (B_L)=\im (A_L)$, and $A=[a_{ij}]$ is an $r \times r$ matrix consisting of the first $r$ rows of $A_L$, then $V_X(I_L)\cap T_X=Y_{A,\K^*}$.
\end{pro}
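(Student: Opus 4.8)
The plan is to identify the two descriptions of $V_X(I_L)\cap T_X$ — the algebraic one (zero locus of $F_\m$, $\m\in L$) and the parametric one ($Y_{A,\K^*}$) — by passing through the group $G$ and the monomial map $\pi$. The key observation is that a point $[P]\in T_X$ lies in $V_X(I_L)\cap T_X$ if and only if $\x^{\be_j}(P)=1$ for each basis vector $\be_j$ of $L$, by Lemma \ref{l:varietyoflattice}. Since $\K=\F_q$, each coordinate $p_k$ of $P$ satisfies $p_k^{q-1}=1$, so write $p_k=\eta^{s_k}$ for a generator $\eta$ of $\F_q^*$; then $[P]\in V_X(I_L)\cap T_X$ translates into the system $\be_j\cdot \mathbf{s}\equiv 0 \pmod{q-1}$ for $j=1,\dots,\ell$, where $\mathbf{s}=(s_1,\dots,s_r)\in\Z^r$. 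This congruence system is exactly the statement that $\mathbf{s}\in\Z^r$ extends to an integer vector $(\mathbf{s},\mathbf{w})\in\Z^{r+\ell}$ in $\Ker(B_L)$, because the rows of $B_L$ are $[\be_j\ (q-1)\mathbf{e}_j]$ and the extra coordinates $\mathbf{w}$ absorb the multiples of $q-1$.

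First I would set up this dictionary carefully: the map $\F_q^*{}^r\to X$, $P\mapsto [P]$, is surjective onto $T_X$, and $p_k\mapsto s_k\bmod(q-1)$ identifies $\F_q^*$ with $\Z/(q-1)\Z$. Next I would show the two set-theoretic containments. For $Y_{A,\K^*}\subseteq V_X(I_L)\cap T_X$: given $t_1,\dots,t_r\in\F_q^*$, the point $[P]$ with $p_k=\prod_i t_i^{a_{ik}}$ has $\x^{\be_j}(P)=\prod_k p_k^{(\be_j)_k}=\prod_i t_i^{(A\be_j)_k\cdots}$ — more precisely $\x^{\be_j}(P)=\prod_i t_i^{\langle \text{$i$-th row of }A_L,\ (\be_j,(q-1)\mathbf e_j)\rangle}$ up to the extra $\mathbf{w}$-block, and since the columns of $A_L$ span $\Ker(B_L)$, each such inner product is a multiple of... wait — the cleaner direction is: the rows of $A_L$ lie in $\Ker(B_L)$ is false; rather $\im(A_L)=\Ker(B_L)$ means the \emph{columns} of $A_L$ span $\Ker(B_L)$. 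So I would instead argue: $B_L A_L = 0$, hence for the top block $A$ we get, for each $j$, $\be_j A + (q-1)(\text{$j$-th row-block of the bottom of }A_L) = 0$ as maps $\Z^r\to\Z$, i.e. $\be_j\cdot(A\mathbf{c})\equiv 0\pmod{q-1}$ for every $\mathbf{c}\in\Z^r$; exponentiating with $t_i=\eta^{c_i}$ gives $\x^{\be_j}(P)=1$. For the reverse containment, given $[P]\in V_X(I_L)\cap T_X$, pick $\mathbf{s}\in\Z^r$ with $p_k=\eta^{s_k}$; the vanishing conditions give $\be_j\cdot\mathbf{s}\equiv 0\pmod{q-1}$, so there is $\mathbf{w}\in\Z^\ell$ with $(\mathbf{s},\mathbf{w})\in\Ker(B_L)=\im(A_L)$, say $(\mathbf{s},\mathbf{w})=A_L\mathbf{c}$; then $\mathbf{s}=A\mathbf{c}$ and setting $t_i=\eta^{c_i}$ exhibits $[P]\in Y_{A,\K^*}$.

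The main obstacle, I expect, is bookkeeping the two coordinate blocks of $A_L$ consistently — keeping straight that $\im(A_L)=\Ker(B_L)$ is a statement about columns, and translating "column span" into the pointwise identities needed for the parametrization — together with the harmless ambiguity in choosing $\mathbf{s}$ modulo $q-1$ and the generator $\eta$, which must be fixed once at the outset (and shown not to matter because different choices differ by elements already accounted for in $\Ker(B_L)$). Everything else is the routine translation between multiplicative relations among $q$-th... $(q-1)$-st roots of unity in $\F_q^*$ and additive congruences mod $q-1$, for which I would invoke Lemma \ref{l:varietyoflattice} to reduce from all of $I_L$ to the finitely many basis binomials $F_{\be_j}$.
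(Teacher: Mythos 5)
Your proposal is correct and follows essentially the same route as the paper: reduce to the basis binomials via Lemma \ref{l:varietyoflattice}, write coordinates as powers of a generator $\eta$ of $\F_q^*$, translate the vanishing conditions into the congruences $\mathbf b_j\cdot\mathbf s\equiv 0\pmod{q-1}$, and identify the solution set with the first $r$ coordinates of $\Ker(B_L)=\im(A_L)$. Your explicit verification of the containment $Y_{A,\K^*}\subseteq V_X(I_L)\cap T_X$ via $B_LA_L=0$ merely spells out what the paper dismisses as "straightforward," so there is no substantive difference.
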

\begin{proof} Since $\K$ is a finite field, $\K^*=\la \eta \ra$ is cyclic, and so every point in $T_X$ is of the form $[P]=[\eta^{s_1}:\cdots:\eta^{s_r}]$ for some $\mathbf s=(s_1,\dots,s_r)\in \N^r$. Then, by Lemma \ref{l:varietyoflattice}, we have $[P]\in V_X(I_L)$ if and only if $F_{\be_j}(P)=0$ for all $j=1,\dots,\ell$. This is equivalent to $\eta^{\mathbf s\cdot \be_j}=1$ for all $j=1,\dots,\ell$. Since the order of $\eta $ is $q-1$ the latter is equivalent to $\mathbf s\cdot \mathbf b_j \cong 0 \mod q-1$ for all $j=1,\dots,\ell$. This can be turned into a system of linear equations as $\mathbf s\cdot \mathbf b_j \cong 0 \mod q-1$ if and only if $\mathbf s\cdot \mathbf b_j =(q-1) c_j$ for some $c_j \in \Z$:
$$\begin{array}{lcl}
 s_1b_{11}+\cdots+s_rb_{1r}-(q-1)c_1&=&0\\
 &\vdots&\\
  s_1b_{\ell 1}+\cdots+s_rb_{\ell r}-(q-1)c_{\ell}&=&0.
\end{array}
$$
Then $B_L$ is the matrix of this system. Thus, $\mathbf s\in \Z^r$ is a common solution of the equations $\mathbf s\cdot \mathbf b_j \cong 0 \mod q-1$ for all $j=1,\dots,\ell$ if and only if $(\mathbf s,\mathbf c)\in \Ker (B_L)\cap \Z^{r+\ell}$ for some $\mathbf c=(c_1,\dots,c_{\ell}) \in \Z^{\ell}$. If $A_L$ is an integer matrix of size $(r+\ell)\times r$ such that $\Ker (B_L)=\im (A_L)$, then the last condition means that there is some $(\lambda_1,\dots,\lambda_{r})\in \Z^{r}$ with $(\mathbf s,\mathbf c)=A_L(\lambda_1,\dots,\lambda_r)$. If $A=[a_{ij}]$ is an $r \times r$ matrix consisting of the first $r$ rows of $A_L$, we have $[P]\in V_X(I_L)$ if and only if  $(s_1,\dots,s_{r})=A(\lambda_1,\dots,\lambda_{r})$. In this case, $s_i=a_{i1}\lambda_1+\cdots+a_{ir}\lambda_{r}$ and hence $\eta^{s_i}=(\eta^{\lambda_1})^{a_{i1}}\cdots (\eta^{\lambda_r})^{a_{ir}}$, for all $i=1,\dots,r$. Letting $t_j=\eta^{\lambda_j}$ for any $j=1,\dots,r$, we conclude that $[P]\in V_X(I_L)$ if and only if $[P]=[t_1^{a_{11}}\cdots t_r^{a_{1r}}:\cdots :t_1^{a_{r1}}\cdots t_r^{a_{rr}}]$. So, if $[P]\in V_X(I_L)\cap T_X$ then $[P]\in Y_{A,\K^*}$ by above. Converse is straightforward since any $t_j\in \K^*$ is of the form $\eta^{\lambda_j}$ for $j=1,\dots,r$.
\end{proof}

\begin{algorithm} \label{a:algorithm}
\caption{ Parameterizing the zero set of a lattice ideal in the torus.}
\begin{flushleft}
\hspace*{\algorithmicindent} \textbf{Input} A matrix L whose rows constitute a basis for a lattice $L\subset \Z^r$,\\
\hspace*{\algorithmicindent} \textbf{Output} A matrix $A$  parameterizing $V_X(I_L)\cap T_X$.
\end{flushleft}
\begin{algorithmic}[1]
 			\State Form the matrix $B_L$ adding to L an identity matrix of size rank L.
 			\State Find a matrix $A_L$ with $\Ker (B_L)=\im (A_L)$
 		    \State Choose the first $r$ rows of $A_L$
\end{algorithmic}
\end{algorithm}

The following will illustrate how to run this algorithm in Macaulay$2$. 
 \begin{ex}
	\label{ex:H2param} Let $X=\cl H_{2}$ be the Hirzebruch surface whose rays are generated by $\vv_1=(1,0)$, $\vv_2=(0,1)$, $\vv_3=(-1,2)$,
	and $\vv_4=(0,-1)$. Thus we have the short exact sequence 
	$$\dis \xymatrix{ \mathfrak{P}: 0  \ar[r] & \Z^2 \ar[r]^{\phi} & \Z^4 \ar[r]^{\bb}& \Z^2 \ar[r]& 0},$$  
	where $$\phi=\begin{bmatrix}
	1 & 0 & -1& ~~0 \\
	0 & 1 & ~~~2& -1
	\end{bmatrix}^T   \quad  \mbox{ and} \quad \bb=\begin{bmatrix}
	1 & -2 & 1& 0 \\
	0 & ~~~1 & 0&1  
	\end{bmatrix}.$$ 
	This shows that the class group is $\cA=\Z^2$ and the total coordinate ring is $S=K[x_1,x_2,x_3,x_4]$ where  $$\deg_{\cA}(x_1)=\deg_{\cA}(x_3)=(1,0), \deg_{\cA}(x_2)=(-2,1), \deg_{\cA}(x_4)=(0,1).$$ 
	
We illustrate Proposition \ref{p:parameterisingV_L} taking $L\subset \Z^4$ to be the lattice generated by the rows of the matrix L below.
\begin{verbatim}
q=11;
beta = matrix {{1, -2, 1, 0}, {0, 1, 0, 1}};
L=matrix {{10,0,-10,0},{0,5,10,-5}};
beta*transpose L--L is homogeneous as this is zero
r=numColumns L;
l=numRows L;
BL=L|(q-1)*id_(ZZ^l)
O = matrix mutableMatrix(ZZ,r,l)--zero matrix
A=transpose ((id_(ZZ^r)| O)*gens ker BL)

o1 = |  0  1  0  1 |
     |  1  0  0  0 |
     |  0  2 -1  0 |
     | -1  0 -1  0 | 
\end{verbatim}	
In this example the zero set $V_X(I_L)$ of the lattice ideal $I_L=\la x_1^{10}-x_3^{10}, x_2^{5}x_3^{10}-x_4^{5} \ra$ does lie in the torus, as $V(I_L)\setminus (\F_q^*)^4 \subset V(B)=V(x_1,x_3)\cup V(x_2,x_4)$. Therefore, $V_X(I_L)=Y_{A,\F_{11}^*}$.
\end{ex}
We also consider the following typical singular example.
\begin{ex} \label{ex:1113param} Let $X=P(1,1,1,3)$ be the weighted projective surface with homogeneous coordinate ring $S=\K [x_1,x_2,x_3,x_4]$ where $\deg_{\cA}(x_1)=\deg_{\cA}(x_2)=\deg_{\cA}(x_3)=1$, $\deg_{\cA}(x_4)=3$.
The same lattice is homogeneous here as in the previous example. 
\begin{verbatim}
q=11;
beta = matrix {{1,1,1,3}};
Phi=gens ker beta

o1 =  |  1  -1   0  |
      | -1   0   3  |
      |  0   1   0  |
      |  0   0  -1  |
L=matrix {{10,0,-10,0},{0,5,10,-5}};
beta*transpose L--L is homogeneous as this is zero
r=numColumns L;
l=numRows L;
BL=L|(q-1)*id_(ZZ^l)
O = matrix mutableMatrix(ZZ,r,l)--zero matrix
A=transpose ((id_(ZZ^r)| O)*gens ker BL)

o1 = |  0  1  0  1 |
     |  1  0  0  0 |
     |  0  2 -1  0 |
     | -1  0 -1  0 |
\end{verbatim}
Thus we have the short exact sequence 
	$$\dis \xymatrix{ \mathfrak{P}: 0  \ar[r] & \Z^3 \ar[r]^{\phi} & \Z^4 \ar[r]^{\bb}& \Z \ar[r]& 0},$$  
	where $$\phi=\begin{bmatrix}
	~~1 & -1 & ~~0& ~~0 \\
	-1 & ~~0 & ~~1& ~~0 \\
	~~0 & ~~3 & ~~0& -1
	\end{bmatrix}^T   \quad  \mbox{ and} \quad \bb=\begin{bmatrix}
	1 & 1 & 1& 3 \\  
	\end{bmatrix}.$$ 
In this example the zero set $V_X(I_L)$ of the lattice ideal does not lie in the torus. But if we add to $L$ the subgroup generated by the vector $(0,10,-10,0)$ and denote by $L'$ the new lattice we obtain, then it follows that $V_X(I_{L'})=V_X(I_L)\cap T_X=Y_{A,\F_{11}^*}$.
\end{ex}

We close this section with the following observation.

\begin{coro} If $\K=\F_q$, then $Y_{Q,H}=Y_{A,\K^*}$ for some square integer matrix $A$.
\end{coro}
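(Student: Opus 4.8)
The plan is to combine Theorem~\ref{t:parametrizingsubmonoids} with Proposition~\ref{p:parameterisingV_L} so as to pass from an arbitrary parametrizing matrix $Q$ to a square one. First I would observe that $Y_{Q,H}$ is always a finite submonoid of $T_X$: this is exactly the implication $(2)\Rightarrow(1)$ of Theorem~\ref{t:parametrizingsubmonoids} (taking $t_i=1$ gives $[1]\in Y_{Q,H}$, and the product formula shows closure under multiplication, while finiteness of $H$ forces finiteness of $Y_{Q,H}$). Since $\K=\F_q$ is finite, a finite submonoid of the group $T_X$ is automatically a finite subgroup.

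Next, by Theorem~\ref{t:subgpsLattices} the vanishing ideal $I(Y_{Q,H})$ is a lattice ideal, say $I_L$ for a homogeneous lattice $L\subset\Z^r$ (homogeneity is forced by Proposition~\ref{p:homogeneouslattice} since $I(Y_{Q,H})$ is $\cA$-homogeneous by definition). By Lemma~\ref{L:Zariski}(6) together with the fact that $Y_{Q,H}\subseteq T_X$ is already Zariski closed in $T_X$ (it is finite), we get
$$Y_{Q,H}=\ov{Y_{Q,H}}\cap T_X=V_X(I(Y_{Q,H}))\cap T_X=V_X(I_L)\cap T_X.$$
Now I would apply Proposition~\ref{p:parameterisingV_L} to this homogeneous lattice $L$: it produces an $r\times r$ integer matrix $A$ (the first $r$ rows of the matrix $A_L$ with $\Ker(B_L)=\im(A_L)$) such that $V_X(I_L)\cap T_X=Y_{A,\K^*}$. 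Chaining the two identifications gives $Y_{Q,H}=Y_{A,\K^*}$ with $A$ square of size $r\times r$, which is the claim.

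The only genuinely substantive input is that a finite subgroup of $T_X$ has a lattice ideal as its vanishing ideal, and that this lattice ideal's torus zero set is re-parametrized by a square matrix over all of $\K^*$; both are already established above, so there is no real obstacle beyond assembling them in the right order. The one point to be careful about is verifying that $Y_{Q,H}$ is Zariski closed in $T_X$ so that Lemma~\ref{L:Zariski}(6) applies cleanly — this is immediate because $Y_{Q,H}$ is a finite set of points and points are closed, but it should be stated. (If one prefers to avoid even this, one can instead argue directly that $I(Y_{Q,H})=I_L$ forces $V_X(I_L)\cap T_X\subseteq \ov{Y_{Q,H}}\cap T_X=Y_{Q,H}$ and the reverse inclusion is trivial.) Note the resulting $A$ uses the full torus $H'=\K^*$ rather than the original $H$, which is exactly why the matrix size can be normalized to $r\times r$.
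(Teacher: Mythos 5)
Your argument is correct and follows essentially the same route as the paper: identify $I(Y_{Q,H})$ as a homogeneous lattice ideal $I_L$ (via the subgroup/lattice-ideal correspondence), use Lemma~\ref{L:Zariski} to recover $Y_{Q,H}=V_X(I_L)\cap T_X$, and then apply Proposition~\ref{p:parameterisingV_L} to reparametrize by a square matrix over all of $\K^*$. Your version merely makes explicit the closedness and subgroup checks that the paper leaves implicit.
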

\begin{proof}
Let $I_L=I(Y_{Q,H})$ be the lattice ideal of $Y_{Q,H}\subseteq T_X$. Then, by Proposition \ref{p:parameterisingV_L}, there is a square integer matrix $A$ such that $V_X(I_L)\cap T_X=Y_{A,\K^*}$. Since $Y_{Q,H}=V_X(I_L)$ by Lemma \ref{L:Zariski}, the claim follows.
\end{proof}

\section{Degenerate Tori}		
In this section, we focus on finite submonoids of the torus $T_X$ which are parameterized by diagonal matrices.
\begin{defi} If $H=\la \eta \ra$ is a finite submonoid of $\K^*$, then the finite submonoid $Y_{A,H} =\{[t_1^{a_1}:\cdots:t_r^{a_r}] \: : \: t_i \in H \}$ of the torus $T_X$ is called a {\bf degenerate torus}. 
\end{defi}
Since $H=\la \eta \ra$, every $t_i \in H$ is of the form $t_i=\eta^{s_i}$, for some $0 \leq s_i \leq h-1$, where $h=| \eta |$ is the order of $\eta$. Let $d_i=| \eta^{a_i} |$ and $D=diag(d_1,\dots,d_r)$. 

\begin{pro}\label{p:degY}  If $d_1,\dots,d_r$ are coprime to each other then $Y_{A,H}$ is a cyclic group of order $d_1 \cdots d_r$. 
\end{pro}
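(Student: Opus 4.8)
The plan is to exhibit $Y_{A,H}$ as an internal direct product of $r$ cyclic subgroups of orders $d_1,\dots,d_r$ and then conclude by the Chinese Remainder Theorem.

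First I would reduce to a set of generators. Writing $H=\la\eta\ra$, every point of $Y_{A,H}$ has the form $[\eta^{k_1a_1}:\cdots:\eta^{k_ra_r}]$ with $k_i\in\Z$. For each $i$ set $Q_i:=[1:\cdots:1:\eta^{a_i}:1:\cdots:1]\in Y_{A,H}$, the point with $\eta^{a_i}$ in the $i$-th slot (corresponding to $t_i=\eta$ and $t_j=1$ for $j\ne i$). Multiplying representatives coordinatewise gives $Q_1^{k_1}\cdots Q_r^{k_r}=[\eta^{k_1a_1}:\cdots:\eta^{k_ra_r}]$, so $Y_{A,H}$ coincides with the subgroup $\la Q_1,\dots,Q_r\ra$ of $T_X$ generated by the $Q_i$; in particular $Y_{A,H}$ is a group, which also follows from its being a finite submonoid of the group $T_X$.

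Next I would pin down the order of each $Q_i$ in $T_X$. On the one hand $Q_i^{d_i}=[1:\cdots:(\eta^{a_i})^{d_i}:\cdots:1]=[1]$ since $d_i=|\eta^{a_i}|$. On the other hand, $Q_i^{k}=[1]$ means $(1,\dots,1,\eta^{ka_i},1,\dots,1)\in G=\Ker\pi$, and by the description of $G$ coming from the sequence $\mathfrak{P}^*$ this says $(\eta^{ka_i})^{c}=1$ for every component $c$ of the ray generator $\vv_i$; as $\vv_i$ is a primitive lattice vector its components have greatest common divisor $1$, so $\eta^{ka_i}=1$, i.e. $d_i\mid k$. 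Hence $\la Q_i\ra$ is cyclic of order exactly $d_i$. This is the only step that uses the geometry of $X$ rather than pure group theory — it rests squarely on the primitivity of the $\vv_i$ — and it is where I expect the real content of the proposition to lie.

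Finally, with $d_1,\dots,d_r$ pairwise coprime, a short induction closes the argument: if $\la Q_1\ra\cdots\la Q_i\ra$ has order $d_1\cdots d_i$, then $\gcd(d_1\cdots d_i,\,d_{i+1})=1$ forces $\la Q_1\ra\cdots\la Q_i\ra\cap\la Q_{i+1}\ra=\{[1]\}$, so $\la Q_1\ra\cdots\la Q_{i+1}\ra$ has order $d_1\cdots d_{i+1}$ and is the internal direct product of the $\la Q_j\ra$. Therefore $Y_{A,H}\cong\Z/d_1\Z\times\cdots\times\Z/d_r\Z$, which by the Chinese Remainder Theorem is cyclic of order $d_1\cdots d_r$, as claimed.
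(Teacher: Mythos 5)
Your proof is correct and follows essentially the same route as the paper's: the same generators $[P_i]=[1:\cdots:\eta^{a_i}:\cdots:1]$, the same use of $G=\Ker\pi$ together with primitivity of the ray generators $\vv_i$ to pin the order of $[P_i]$ at exactly $d_i$, and the same coprimality argument to make the product of cyclic subgroups direct. You merely spell out the final direct-product induction and the Chinese Remainder Theorem step, which the paper leaves implicit.
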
 

\begin{proof} Let $[P_i]=[1:\cdots:\eta^{a_i}:\cdots:1]$, for $i=1,\dots,r$. If $[P_i]^{s_i}=[1]$, then $(1,\dots,1,\eta^{a_is_i},1,\dots,1)\in G$. Since $G=\Ker \pi$, it follows that $\x^{\uu_1}=\cdots=\x^{\uu_n}=1$ at $[P_i]$. So, $\eta^{a_is_iu_{1i}}=\cdots=\eta^{a_is_iu_{ni}}=1$. Since $\vv_i=(u_{1i},\dots,u_{ni})$ is a primitive ray generator, the greatest common divisor of its coordinates is $1$. Thus, $d_i$ divides $s_i$ as $d_i=| \eta^{a_i} |$. This means the point $[P_i]$ generates a cyclic group of order $d_i$, for all $i=1,\dots,r$. 

On the other hand, every element $[t_1^{a_1}:\cdots:t_r^{a_r}]$ is a product $[P_1]^{s_1}\cdots [P_r]^{s_r}$ of powers of the points $[P_1], \dots [P_r]$, as $t_i=\eta^{s_i}$, for all $i=1,\dots,r$. This shows that $Y$ is a product of cyclic subgroups generated by $[P_1],\dots, [P_r]$. Under the hypothesis, this product becomes a direct product, completing the proof.
\end{proof}

\begin{rema} When $X=\mathbb{P}^{r-1}$, $H=\F_q^*$, and $\gcd(d_1,\dots,d_r)=1$, the order of $Y_{A,H}$ is $d_1 \cdots d_r$ by \cite{LVZ}. This does not hold in general, as the following example illustrates. 
\end{rema}
\begin{ex} \label{ex:notcyclic}
Take $X=\cl H_{2}$ to be the Hirzebruch surface over $\F_{11}$ and consider $(a_1,a_2,a_3,a_4)=(2,5,4,5)$. Then, we have $(d_1,d_2,d_3,d_4)=(5,2,5,2)$ and $\gcd(d_1,d_2,d_3,d_4)=1$. In this case $G$ has points of the form $(g_1,g_2,g_1,g_1^2g_2)$ for $g_1,g_2 \in \K^*$. Denote by $C_i$ the cyclic subgroup generated by $[P_i]$, for $i=1,2,3,4$. One can check that $C_1\cap C_3=[1]$, so $C_1C_3=C_1 \times C_3$. Since the orders $25$ of $C_1C_3$ and $2$ of $C_2$ are relatively prime, it follows that $C_1C_2C_3=C_1 \times C_2\times C_3$ having order $50$. Now, $C_4$ is a subgroup of $C_1C_2C_3$, since $[P_4]=[P_2]$ via $(1,1,1,\eta^{5})=(1,\eta^{5},1,\eta^{5})(1,\eta^{5},1,1)$. Hence, $Y_{A,H}=C_1C_2C_3C_4=C_1C_2C_3$ have order $50$. 

On the other hand, if we take $(a_1,a_2,a_3,a_4)=(5,2,5,4)$, then 
$(d_1,d_2,d_3,d_4)=(2,5,2,5)$ and $\gcd(d_1,d_2,d_3,d_4)=1$ as before. We observe that $[P_3]=[P_1]$ and $C_4=C_2$ in this case. So, 
$Y_{A,H}=C_1C_2C_3C_4=C_1\times C_2$ have order $10$. 
\end{ex}

As $Y_{A,H}$ is a submonoid of $T_X$, $I(Y_{A,H})$ is a lattice ideal. We are ready to give our first main result specifying the lattice of this ideal. Algorithms for finding general $I(Y)$ are given in \cite{BaranSahin}.
\begin{tm}\label{t:I(Y_A)}  If $Y=Y_{A,H}$ then $I(Y)=I_L$ for  $L=D(L_{\beta D})$. 
\end{tm}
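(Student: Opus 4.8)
The plan is to unwind both sides into explicit conditions on exponent vectors and then match lattices. Since $\K=\F_q$ is finite, write $H=\la\eta\ra$ with $|\eta|=h$; every $t_i\in H$ is $t_i=\eta^{s_i}$, so a point of $Y=Y_{A,H}$ has the form $[P]=[\eta^{a_1s_1}:\cdots:\eta^{a_rs_r}]$ for $\mathbf s\in\Z^r$. By Proposition~\ref{p:binom}, $I(Y)$ is a lattice ideal, say $I(Y)=I_{L'}$ for a unique (saturated, homogeneous) lattice $L'$. So it suffices to prove $L'=D(L_{\beta D})$, where $L_{\beta D}=\Ker(\beta D)\subseteq\Z^r$ and $D=\mathrm{diag}(d_1,\dots,d_r)$ with $d_i=|\eta^{a_i}|$. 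I would do this by showing that for $\mathbf m\in\Z^r$, the binomial $F_{\mathbf m}=\x^{\mathbf m^+}-\x^{\mathbf m^-}$ vanishes on $Y$ if and only if $\mathbf m\in D(L_{\beta D})$; combined with the fact (from the proof of Proposition~\ref{p:binom}, via Dedekind) that $I(Y)$ is generated by such binomials, this identifies the two lattices.

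The key computation is the vanishing criterion. A monomial character equality $\x^{\mathbf m^+}=\x^{\mathbf m^-}$ on $Y$ means $\x^{\mathbf m}(P)=1$ for every $[P]\in Y$, i.e. $\prod_i \eta^{a_is_im_i}=1$ for all choices of $\mathbf s$; but one must be careful that $[P]$ is a class modulo $G$, so the correct statement is that some representative evaluates to $1$, equivalently $\x^{\mathbf m}$ is trivial on $Y$ as a character of the quotient group. First I would reduce to generators: by Proposition~\ref{p:degY}'s argument (the points $[P_i]=[1:\cdots:\eta^{a_i}:\cdots:1]$ generate $Y$ and $[P_i]$ has order $d_i$), $F_{\mathbf m}$ vanishes on $Y$ iff it vanishes at each $[P_i]$, iff for each $i$ we have $\eta^{a_im_i}\in$ (the subgroup of $\K^*$ killed by the relations defining $[P_i]$). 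Here one uses that $[P_i]^{m_i}=[1]$ in $T_X=(\K^*)^r/G$ iff $(1,\dots,\eta^{a_im_i},\dots,1)\in G=\Ker\pi$, and since $\mathbf v_i$ is primitive this forces $d_i\mid m_i$ — exactly as in the proof of Proposition~\ref{p:degY}. Wait: that shows $x_i^{m_i}-1$ alone; for a general binomial $\x^{\mathbf m^+}-\x^{\mathbf m^-}$ I need the joint condition. The clean way: $F_{\mathbf m}\in I(Y)$ iff the character $\x^{\mathbf m}$ of $(\K^*)^r$ is trivial on the preimage $\widetilde Y=\pi^{-1}(Y)$; and $\widetilde Y$ is generated by $G$ together with the vectors $(1,\dots,\eta^{a_i},\dots,1)$. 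So $\x^{\mathbf m}$ is trivial on $\widetilde Y$ iff $\mathbf m\in L_\beta$ (triviality on $G$, using $\mathfrak P^*$ and that $I(Y)$ is homogeneous so $\mathbf m\in L_\beta=\Ker\beta$ automatically) and $\eta^{a_im_i}=1$ for all $i$, i.e. $d_i\mid m_i$.

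Then I would translate "$\mathbf m\in L_\beta$ and $d_i\mid m_i$ for all $i$" into "$\mathbf m\in D(L_{\beta D})$": writing $\mathbf m=D\mathbf m'$ with $\mathbf m'\in\Z^r$ (possible exactly because $d_i\mid m_i$), the homogeneity condition $\beta\mathbf m=0$ becomes $\beta D\mathbf m'=0$, i.e. $\mathbf m'\in L_{\beta D}$. This gives $L'=D(L_{\beta D})$ as sets of vectors, hence $I(Y)=I_{L'}=I_{D(L_{\beta D})}=I_L$ with $L=D(L_{\beta D})$, which is the claim. The one subtlety to get right — and the main obstacle — is the passage from "$F_{\mathbf m}$ vanishes on $Y$" to a statement purely about the group $\widetilde Y\subseteq(\K^*)^r$ and its character $\x^{\mathbf m}$: one must argue that homogeneity of $I(Y)$ lets us replace "vanishes on the $G$-orbit set $Y\subseteq T_X$" by "the character is trivial on $\widetilde Y$", and that it suffices to test on the generating points $[P_i]$ because $Y$ is generated by them as a group (Proposition~\ref{p:degY} or its proof). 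The rest is the bookkeeping identity $\{\mathbf m:\beta\mathbf m=0,\ D\mid\mathbf m\}=D\cdot\Ker(\beta D)$, which is routine.
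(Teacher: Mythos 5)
Your proposal is correct and follows essentially the same route as the paper: use that $I(Y)$ is a lattice ideal generated by binomials $F_{\mathbf m}$, extract $d_i\mid m_i$ by evaluating at the points $[1:\cdots:\eta^{a_i}:\cdots:1]$ and $\mathbf m\in L_\beta$ from homogeneity, then rewrite this as $\mathbf m\in D(L_{\beta D})$, with the converse checked by direct evaluation on all of $Y_{A,H}$. The only (harmless) quibble is your parenthetical that the lattice $L'$ is saturated --- it need not be (e.g.\ $I(T_X)=I_{(q-1)L_\beta}$), but your argument never uses saturation, and your extra care about representative-independence modulo $G$ is a point the paper leaves implicit.
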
 

\begin{proof} Take a generator $F_\m=\x^{\m^{+}}-\x^{\m^{-}} $ of $I(Y_{A,H})$. Since $F_\m$ homogeneous, $\m \in L_{\bb}$ and $F_{\m}(P)=0$, for all $[P]\in Y_{A,H}$. So, $\x^{\m}(P)=1$, for all $[P]\in Y_{A,H}$. This is true in particular for points having homogeneous coordinates $[1:\cdots:\eta^{a_i}:\cdots:1]$ with $\eta^{a_i}$ in the $i-th$ component.  So, $(\eta^{a_i})^{m_i}=1$ yielding $d_i | m_i$, i.e., $\m=D(\m')$, for some $\m'\in \Z^r$. As $\m \in L_{\bb}$, we have $\bb D(\m')=\bb \m=0$, so $\m'\in L_{\bb D}$. Therefore, $\m \in D(L_{\beta D})=L$ and $F_\m$ is a binomial generator of $I_{L}$.

Conversely, take a binomial generator $F_\m$ of $I_{L}$. Then $\m \in L$, so $\m=D(\m')$, for some $\m'\in L_{\bb D}$. Since $\bb \m=\bb D(\m')=0$,  $F_\m$ is homogeneous. On the other hand, $\x^{\m}([P])=\eta^{a_1s_1m_1}\cdots \eta^{a_rs_rm_r}=1$ , for all $[P]=[\eta^{a_1s_1}:\cdots:\eta^{a_rs_r}] \in Y_{A,H}$, since  $m_i=d_im'_i$ and $(\eta^{a_i})^{d_i}=1$ for all $i \in [r]$. Thus, $\x^{\m^{+}}([P])=\x^{\m^{-}} ([P])$ and $F_\m([P])=0$, for all $[P] \in Y_{A,H}$. Hence, $F_\m$ is a generator of $I(Y_{A,H})$.
\end{proof}

\begin{rema} As $d_i=| \eta^{a_i} |=h/ \gcd(h,a_i)$, the lattice $L=D(L_{\beta D})$ depends on $H$. Since the lattice $L_{\beta D}$ is saturated, $I_{L_{\beta D}}$ is a prime lattice ideal also known as a toric ideal. There are various algorithms for finding generators of a toric ideal, and these can be used here.
\end{rema}

	\begin{algorithm} \label{a:algorithm2}
 		\caption{ Lattice of the vanishing ideal of a degenerate torus.}
\begin{flushleft}
		\hspace*{\algorithmicindent} \textbf{Input} A matrix A parameterizing the degenerate torus,\\
 		\hspace*{\algorithmicindent} \textbf{Output} A matrix $\Gamma$ whose columns generate the lattice $L$.
	\end{flushleft} 
 		\begin{algorithmic}[1]
 			\State Find the matrix $D$ using $d_i=h/ \gcd(h,a_i)$.
 			\State Compute the matrix LbetaD whose columns generate $L_{\beta D}$.
 		    \State Multiply D by LbetaD 
 		\end{algorithmic}
 	\end{algorithm}

\begin{defi} If each column of a matrix has both a positive and a negative entry we say that it is {\it mixed}. If it does not have a square mixed submatrix, then it is called {\it dominating}. 
\end{defi}

\begin{ex} The matrix $\phi=\begin{bmatrix}
 0 & 1 &1& ~~1& ~~0 & -1& -1& -1 \\
 1 & 1 &0& -1 & -1& -1 & ~~0& ~~1
\end{bmatrix}^T$ is a mixed BUT not dominating matrix.
\end{ex}

\begin{defi}\label{D:CI} 
For $Y\subset X$, the ideal $I(Y)$ is called a {\it complete intersection} if it is generated by
a regular sequence of homogeneous polynomials $F_1,\dots, F_k\in S$ 
where $k$ is the height of $I(Y)$.
\end{defi}

\begin{tm} [Morales-Thoma 2005]\label{t:MT} Let $L$ be a sublattice of $\Z^r$ with $L\cap \N^r=\{0\}$ and $\Gamma$ be a matrix whose columns constitute a basis of $L$. Then $I_L$ is a complete intersection iff $\Gamma$ is mixed dominating.
\end{tm}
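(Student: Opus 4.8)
I would deduce this from the sharper fact that, for a basis matrix $\Gamma=[\m_1\,|\cdots|\,\m_s]$ of $L$ (with $s=\rank L$), one has $I_L=(F_{\m_1},\dots,F_{\m_s})$ if and only if $\Gamma$ is mixed dominating; the theorem then follows by combining this with the statement that $I_L$ is a complete intersection precisely when $L$ admits such a basis. The first step is exactly that statement. Since $S$ is a Cohen--Macaulay domain and $\operatorname{ht}(I_L)=s$, the ideal $I_L$ is a complete intersection if and only if it can be generated by $s$ elements: any $s$ generators of a height-$s$ ideal of a Cohen--Macaulay ring form a regular sequence (strip off one non-zerodivisor at a time, using that a hypersurface quotient of $S$ is again Cohen--Macaulay). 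Moreover $I_L$ is $\Z^r/L$-graded, each of its graded pieces is spanned by binomials, and every binomial in $I_L$ is a monomial multiple of some $F_\m$ with $\m\in L$; hence $I_L$ has a minimal generating set $F_{\m_1},\dots,F_{\m_t}$ with $\m_i\in L$. If $I_L$ is a complete intersection then $t=s$ and $I_{\langle\m_1,\dots,\m_s\rangle}=I_L$; since $L'\mapsto I_{L'}$ is injective on lattices (a binomial $\x^u-\x^v$ with disjoint supports lies in $I_{L'}$ iff $u-v\in L'$), $\m_1,\dots,\m_s$ is a $\Z$-basis of $L$. Conversely any basis of $L$ yields $s$ binomials of $I_L$. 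Thus $I_L$ is a complete intersection iff $L$ has a basis $\m_1,\dots,\m_s$ with $I_L=(F_{\m_1},\dots,F_{\m_s})$. Finally, $L\cap\N^r=\{0\}$ forces $s\le r-1$ (otherwise $L$ has finite index in $\Z^r$, hence meets $\N^r$ nontrivially) and forces every nonzero lattice vector --- in particular every column of $\Gamma$ --- to be mixed, so for $\Gamma$ the condition ``mixed dominating'' reduces to ``no square mixed submatrix''.

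For ``$\Gamma$ mixed dominating $\Rightarrow I_L=(F_{\m_i})$'' I would induct on $r$, the cases $r\le 1$ being trivial. The combinatorial crux is the peeling lemma (part of the structure theory of mixed dominating matrices, see \cite{MT}): a full-column-rank matrix whose columns are all mixed and which has no square mixed submatrix possesses a row with exactly one nonzero entry. Granting it, reorder so that row $r$ of $\Gamma$ has its unique nonzero entry $c\ne0$ in column $s$; then $\m_1,\dots,\m_{s-1}$ have last coordinate $0$, so they span a lattice $\bar L\subset\Z^{r-1}$ with $L=\bar L\oplus\Z\m_s$ and basis matrix $\bar\Gamma$ (delete row $r$ and column $s$), which is again mixed dominating. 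By induction $I_{\bar L}=(F_{\m_1},\dots,F_{\m_{s-1}})$ in $\K[x_1,\dots,x_{r-1}]$, and one then verifies the elimination identity $I_L=I_{\bar L}S+(F_{\m_s})$: writing the critical binomial as $F_{\m_s}=x_r^{|c|}u-v$ (or $u-x_r^{|c|}v$) with coprime monomials $u,v$ not involving $x_r$, it lets one eliminate $x_r$ from $F_\m$ for every $\m\in L$ modulo $I_{\bar L}S$. Combining the two displays gives $I_L=(F_{\m_1},\dots,F_{\m_s})$, hence $I_L$ is a complete intersection.

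For the converse ``$I_L=(F_{\m_i})\Rightarrow\Gamma$ mixed dominating'' I would argue the contrapositive: a square mixed submatrix $\Gamma_{R,C}$ of $\Gamma$ should produce a binomial of $I_L$ outside $(F_{\m_i})$. The mechanism is that restricting to the coordinates in $R$ (via $x_j\mapsto1$ for $j\notin R$) carries $(F_{\m_c}:c\in C)$ onto an ideal generated by binomials $F_{\bar\m_c}$ with each $\bar\m_c$ mixed, while the corresponding restriction of the lattice ideal is forced --- because its rank is large relative to the number $|R|$ of remaining variables --- to contain pure-power binomials $x_i^{\,N}-1$, and such binomials cannot lie in an ideal all of whose generators are mixed binomials; choosing $\Gamma_{R,C}$ of minimal size lets one handle the degenerate (singular) cases inductively. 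This gives $(F_{\m_c}:c\in C)\subsetneq I_{L_C}\subseteq I_L$ where $L_C=\langle\m_c:c\in C\rangle$, and a closer analysis then forces $(F_{\m_i})\subsetneq I_L$ as well, contradicting the hypothesis. The two places requiring real work --- and which I expect to be the main obstacles --- are the peeling lemma and making this restriction argument rigorous (in particular the case $\det\Gamma_{R,C}=0$); everything else is formal bookkeeping with binomials and lattices, together with standard facts about lattice ideals (height equals rank, equidimensionality, and the lattice-to-ideal dictionary).
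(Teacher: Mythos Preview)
The paper does not prove this statement at all: it is quoted verbatim as a result of Morales and Thoma (reference \cite{MT}) and used as a black box in the proof of Proposition~\ref{p:ci}. There is therefore no ``paper's own proof'' to compare your proposal against.

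Your sketch is a plausible outline of the Morales--Thoma argument itself (peel off a row with a single nonzero entry, induct, and for the converse use a minimal square mixed submatrix to manufacture an obstruction), and you correctly flag the two genuinely nontrivial ingredients --- the peeling lemma and the rigorous restriction/contrapositive step. But none of that is needed for the present paper: here the theorem is simply invoked, so the appropriate ``proof'' in context is a citation, not an argument.
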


\begin{ex} \label{ex:idealnotcyclic}
Take $X=\cl H_{2}$ to be the Hirzebruch surface over $\F_{11}$ and consider $(a_1,a_2,a_3,a_4)=(2,5,4,5)$. Then, we have $(d_1,d_2,d_3,d_4)=(5,2,5,2)$ and $\gcd(d_1,d_2,d_3,d_4)=1$. We compute the vanishing ideal in Macaulay2 as follows:
\begin{verbatim}
i1 :  beta = matrix {{1, -2, 1, 0}, {0, 1, 0, 1}}

o1 = | 1 -2 1 0 |
     | 0 1  0 1 |
     
i2 : D=matrix{{5,0,0,0},{0,2,0,0},{0,0,5,0},{0,0,0,2}};

i3 : LbetaD =gens ker (beta*D);

i4 : gamma=D*LbetaD 

o4 =  | -5   20  |
      |  0   10  |
      |  5    0  |
      |  0  -10  |
\end{verbatim}

Thus, $I(Y_{A,\K^*})=\la x_1^5-x_3^5, x_1^{20}x_2^{10}-x_4^{10} \ra$. 

On the other hand, if we take $(a_1,a_2,a_3,a_4)=(5,2,5,4)$, then 
$(d_1,d_2,d_3,d_4)=(2,5,2,5)$ and $\gcd(d_1,d_2,d_3,d_4)=1$ as before. Similarly, the vanishing ideal is found to be $I(Y_{A,\K^*})=\la x_1^2-x_3^2, x_1^{10}x_2^{5}-x_4^{5} \ra$. Note that the matrix Gamma above is mixed-dominating and that the ideal $I(Y_{A,\K^*})$ is a complete intersection, in both cases.
\end{ex}

\begin{pro} \label{p:ci} A generating system of binomials for $I(Y_{A,H})$ is obtained from that of $I_{L_{\beta D}}$ by replacing $x_i$ with $x_i^{d_i}$. $I(Y_{A,H})$ is a complete intersection if and only if so is the toric ideal $I_{L_{\beta D}}$. In this case, a minimal generating system is obtained from a minimal generating system of $I_{L_{\beta D}}$ this way.
\end{pro}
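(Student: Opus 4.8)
The plan is to exploit Theorem~\ref{t:I(Y_A)}, which identifies $I(Y_{A,H})=I_L$ with $L=D(L_{\beta D})$, and translate statements about $I_L$ into statements about $I_{L_{\beta D}}$ via the substitution $x_i\mapsto x_i^{d_i}$. First I would set up notation: let $\Gamma_0$ be a matrix whose columns form a basis of $L_{\beta D}$, so that $\Gamma:=D\Gamma_0$ has columns forming a basis of $L$ (since multiplication by the invertible-over-$\Q$ diagonal matrix $D$ carries a $\Z$-basis of $L_{\beta D}$ to a $\Z$-basis of $L$). For the first assertion, observe that if $\m'\in L_{\beta D}$ then $D\m'\in L$, and the binomial $F_{D\m'}=\x^{(D\m')^+}-\x^{(D\m')^-}$ is precisely the image of $F_{\m'}=\x^{\m'^+}-\x^{\m'^-}$ under the ring map $\sigma\colon x_i\mapsto x_i^{d_i}$, because $(D\m')^\pm = D(\m'^\pm)$ (the $d_i$ are positive, so taking positive/negative parts commutes with scaling by $D$). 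Hence $\sigma$ sends a generating set $\{F_{\m'_1},\dots,F_{\m'_t}\}$ of $I_{L_{\beta D}}$ (coming from a basis, or from any generating set) to a generating set $\{F_{D\m'_1},\dots,F_{D\m'_t}\}$ of $I_L=I(Y_{A,H})$; one has to note that the $F_\m$ for $\m$ ranging over a $\Z$-basis already generate $I_L$ by Lemma~\ref{l:varietyoflattice}'s underlying fact, or cite that a lattice ideal is generated by the binomials attached to any set of lattice generators.

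For the complete-intersection equivalence, the key point is that height and the regular-sequence property are preserved in both directions under $\sigma$. In one direction, $\height I_L = r - \rank L = r - \rank L_{\beta D} = \height I_{L_{\beta D}}$ since $\rank(D\Gamma_0)=\rank\Gamma_0$. So I would invoke Theorem~\ref{t:MT} (Morales--Thoma): $I_{L_{\beta D}}$ is a complete intersection iff $\Gamma_0$ is mixed dominating, and $I_L$ is a complete intersection iff $\Gamma=D\Gamma_0$ is mixed dominating. Both applications of Theorem~\ref{t:MT} are legitimate because $L_{\beta D}\cap\N^r=\{0\}$ (it is the kernel lattice $L_{\beta D}=\Ker(\beta D)\cap\Z^r$, which contains no nonzero nonnegative vector since $\beta D$ has nonnegative... — more carefully, $L_{\beta D}$ is saturated and $I_{L_{\beta D}}$ is a prime toric ideal, which forces $L_{\beta D}\cap\N^r=\{0\}$), and likewise $L\cap\N^r=\{0\}$ because $I_L=I(Y_{A,H})$ is a prime (radical, at least) ideal not containing any monomial, or directly: $D\m'\in\N^r\Rightarrow\m'\in\N^r\Rightarrow\m'=0$. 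So it remains to prove the combinatorial lemma: for a positive diagonal matrix $D$, the integer matrix $\Gamma_0$ is mixed dominating if and only if $D\Gamma_0$ is. This is immediate: scaling row $i$ by $d_i>0$ does not change the sign of any entry, so ``mixed'' is preserved; and a submatrix of $D\Gamma_0$ is square-mixed iff the corresponding submatrix of $\Gamma_0$ is (again because row-scaling by positive constants preserves the sign pattern and preserves which square submatrices are mixed). Hence ``dominating'' is preserved in both directions.

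The last sentence — that a \emph{minimal} generating system of $I_{L_{\beta D}}$ yields a minimal one of $I(Y_{A,H})$ when these are complete intersections — follows because, for a complete intersection of height $k$, every minimal generating system has exactly $k$ elements (a regular sequence of length $k$ is minimal, and $k=\height$ is forced), and we showed $\height I_L = \height I_{L_{\beta D}}$; since $\sigma$ turns a length-$k$ generating set into a length-$k$ generating set of $I_L$, which is then necessarily minimal, we are done. The main obstacle I anticipate is purely bookkeeping: making sure that ``the binomials $F_\m$ attached to a lattice basis generate $I_L$'' is properly justified (it is standard for lattice ideals, and consistent with Lemma~\ref{l:varietyoflattice}), and that the positivity of the $d_i$ is used correctly when claiming $(D\m')^\pm=D(\m'^\pm)$ and when arguing that row-scaling preserves the mixed-dominating property — both are routine but must be stated, since the whole proposition rests on them.
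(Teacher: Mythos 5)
Your argument is correct and follows essentially the same route as the paper: identify $I(Y_{A,H})=I_L$ with $L=D(L_{\beta D})$ via Theorem~\ref{t:I(Y_A)}, note that the substitution $x_i\mapsto x_i^{d_i}$ is a ring homomorphism carrying $F_{\m'}$ to $F_{D\m'}$ (so ideal-generating sets map to ideal-generating sets), and apply Theorem~\ref{t:MT} to the bases $\Gamma_0$ and $D\Gamma_0$, whose sign patterns --- hence mixed-dominating status --- coincide because $D$ is positive diagonal; your explicit check of the hypothesis $L\cap\N^r=\{0\}$ and of the equality of heights is a small bonus over the paper's write-up. One caution: your parenthetical fallback that ``the $F_\m$ for $\m$ ranging over a $\Z$-basis already generate $I_L$'' is false for general lattice ideals (the binomials of a lattice basis generate only a subideal whose saturation with respect to $x_1\cdots x_r$ is $I_L$, and Lemma~\ref{l:varietyoflattice} concerns zero sets in the torus, not ideal generation) --- but this aside is not load-bearing, since the definition $I_L=\langle F_\m : \m\in L\rangle$ together with the ring-homomorphism argument already closes the first assertion exactly as in the paper.
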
 

\begin{proof} $I(Y_{A,H})=I_L$ for $L=D(L_{\beta D})$. The first part relies on the fact that $\m\in L_{\beta D}$ iff $D\m\in D(L_{\beta D})$ and thus $\x^{\m^{+}}-\x^{\m^{-}} \in I_{L_{\beta D}}$ iff $\x^{D\m^{+}}-\x^{D\m^{-}} \in I(Y_{A,H})$. So, there is a one-to-one correspondence between the binomials with disjoint supports. If $\x^{\m^{+}}-\x^{\m^{-}}=g_1(\x^{\m_1^{+}}-\x^{\m_1^{-}})+\cdots+g_k(\x^{\m_k^{+}}-\x^{\m_k^{-}})$, then $\x^{D\m^{+}}-\x^{D\m^{-}}=D(g_1)(\x^{D\m_1^{+}}-\x^{D\m_1^{-}})+\cdots+D(g_k)(\x^{D\m_k^{+}}-\x^{D\m_k^{-}})$, where $D(g)$ means that every $x_i$ in the polynomial $g$ is replaced with $x_i^{d_i}$.

Let $\Gamma$ be a matrix whose columns constitute a basis for the lattice $L_{\beta D}$. It is clear that $D\Gamma$ is a matrix whose columns form a basis of $L$. Since $D$ is a diagonal matrix with positive entries in the main diagonal, the sign patterns of $\Gamma$ and $D\Gamma$ are the same. By Theorem \ref{t:MT} above, $I(Y_{A,H})$ is a complete intersection iff
so is the toric ideal $I_{L_{\beta D}}$. In this case we know that the minimal generating sets for both ideals have the same cardinality : height $I(Y_{A,H})=$ height $I_{L_{\beta D}}$. If $f_1,\dots,f_k$ are minimal generators for the ideal $I_{L_{\beta D}}$, with $k=$ height $I_{L_{\beta D}}$, then $D(f_1),\dots,D(f_k)$ are generators for the ideal $I(Y_{A,H})$. Since height $I(Y_{A,H})$ is a lower bound for the number of generators, this set must be a minimal generating set, completing the proof.
\end{proof}

\begin{rema} The toric ideal $I_{L_{\beta D}}$ depends on $h$ but not on the field $\K$.
\end{rema} 
When $\cA$ is free, the ideal of $\{[1]\}$ is the toric ideal of $\N\beta$.
\begin{coro} \label{c:torus}When $\K=\F_q$ we have the following.\\
(\textbf{i}) If $Y=\{[1]\}$ then $I(Y)=I_{L_{\beta}}$,\\
(\textbf{ii}) If $Y=T_X$ then $I(Y)=I_{L}$, for $L=(q-1)L_{\beta}$,\\ 
(\textbf{iii}) $I(T_X)$ is a complete intersection iff so is $I_{L_{\beta}}$, which is independent of $q$.
\end{coro}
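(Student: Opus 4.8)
The plan is to derive all three parts as special instances of the degenerate-torus machinery, namely Theorem \ref{t:I(Y_A)} and Proposition \ref{p:ci}, by choosing the parametrizing data appropriately. First I would handle (i): the point $\{[1]\}$ is the degenerate torus $Y_{A,H}$ with $A$ the zero $r\times r$ matrix (so every coordinate is $t_i^0=1$) and $H=\F_q^*=\la\eta\ra$. Then $a_i=0$ for all $i$, so $d_i=|\eta^{a_i}|=|\eta^0|=1$, hence $D=\mathrm{diag}(1,\dots,1)$ is the identity matrix. Theorem \ref{t:I(Y_A)} gives $I(Y)=I_L$ with $L=D(L_{\beta D})=L_{\beta\cdot I}=L_{\beta}$, which is (i). (The remark preceding the corollary that this is the toric ideal of $\N\beta$ when $\cA$ is free is then just the observation that $L_\beta=\ker\beta$ is saturated, so $I_{L_\beta}$ is prime, i.e. a toric ideal.)

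Next, part (ii): take $Y=T_X$, which is the degenerate torus $Y_{A,H}$ with $A$ the identity matrix (so $[t_1:\cdots:t_r]$ ranges over all of $T_X$ as the $t_i$ range over $\F_q^*$) and again $H=\F_q^*=\la\eta\ra$ with $|\eta|=h=q-1$. Now $a_i=1$ for all $i$, so $d_i=|\eta^{1}|=|\eta|=q-1$, hence $D=(q-1)I$. Theorem \ref{t:I(Y_A)} then gives $I(T_X)=I_L$ with $L=D(L_{\beta D})=(q-1)\,L_{\beta\cdot (q-1)I}$; since $\beta\cdot(q-1)I=(q-1)\beta$ has the same kernel as $\beta$, we get $L_{\beta D}=L_{\beta}$ and therefore $L=(q-1)L_{\beta}$, which is (ii).

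For part (iii), I would invoke Proposition \ref{p:ci} directly with the data of part (ii): it says $I(Y_{A,H})$ is a complete intersection if and only if the toric ideal $I_{L_{\beta D}}$ is, and here $L_{\beta D}=L_\beta$, so $I(T_X)$ is a complete intersection if and only if $I_{L_\beta}$ is. Independence of $q$ is immediate because $L_\beta=\ker\beta=\im\phi$ does not involve $q$ at all; equivalently, one can cite the remark that $I_{L_{\beta D}}$ depends only on $h$ and, here, not even on $h$ since $D$ being a scalar multiple of the identity leaves $\beta D$ with the same kernel as $\beta$ regardless of the scalar. I do not anticipate a genuine obstacle: the only thing to be careful about is the bookkeeping identity $L_{\beta\cdot cI}=L_{\beta}$ for a nonzero integer $c$ (clear since scaling the matrix scales each defining equation and does not change the integer solution set), and the verification that $\{[1]\}$ and $T_X$ really are degenerate tori for the stated choices of $A$ — both of which are routine. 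If one prefers not to route through Proposition \ref{p:ci}, part (iii) can alternatively be read off from Theorem \ref{t:MT}: a basis matrix of $(q-1)L_\beta$ has exactly the same sign pattern as a basis matrix of $L_\beta$, so one is mixed dominating iff the other is.
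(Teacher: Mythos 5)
Your proposal is correct and follows essentially the same route as the paper: all three parts are obtained by instantiating Theorem \ref{t:I(Y_A)} and Proposition \ref{p:ci} with the appropriate $A$, $H$, and $D$, and the key bookkeeping step $L_{\beta D}=L_{\beta}$ for $D=(q-1)I_r$ is identical. The only cosmetic difference is in part (i), where the paper realizes $\{[1]\}$ by taking $H=\la 1\ra$ (so $d_i=1$) rather than your choice of $A=0$ with $H=\F_q^*$; both give $D=I_r$ and hence $I(Y)=I_{L_\beta}$.
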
 

\begin{proof} (\textbf{i}) If $Y=\{[1]\}$ then $H=\la 1 \ra $. So, $D=I_r$ and $I(Y)=I_{L_{\beta}}$. This is true for any field $\K$.\\
(\textbf{ii}) If $Y=T_X$ then $H=\K^*$ and $A=I_r$. So, $D=(q-1)I_r$ and thus $I(Y)=I_{L}$ for $L=DL_{\beta D}=(q-1)L_{\beta}$, since $\beta D=(q-1)I_r\beta=(q-1)\beta$ and $\beta$ have the same kernel: $L_{\beta D}=L_{\beta}$.\\ 
(\textbf{iii}) This is a direct consequence of Proposition \ref{p:ci}.
\end{proof}

These consequences generalize some results of \cite{DN} from weighted projective spaces to a general toric variety. Using the matrix $\phi$ defined by the fan $\Sig$ and the result presented in this section one can easily check whether the vanishing ideal of $T_X$ is a complete intersection.


\begin{ex} The matrix $\phi=\begin{bmatrix}
 1 & 0 & -1& ~~0 \\
0 & 1 & ~~~\ell& -1
\end{bmatrix}^T$ is a mixed dominating matrix.

\begin{itemize}

\item So, $I_{L_\bb}=\la {x_1}-{x_3},{x_2}{x_3}^{\ell}-{x_4}^{}\ra$ is a complete intersection.

\item Thus, tori $T_X$ of the Hirzebruch surfaces $\cl H_{\ell}$ are all complete intersection for every $q$ and $\ell$: 
$$I(T_X)=\la {x_1}^{q-1}-{x_3}^{q-1},{x_2}^{q-1}{x_3}^{\ell(q-1)}-{x_4}^{q-1}\ra.$$


  
 \end{itemize}
\end{ex}

\section{Primary Decomposition of Vanishing Ideals}
In this section, we list main properties of vanishing ideals of subsets of the torus $T_X$ over $\F_q$.
\begin{tm} \label{t:primdec} Let $Y$ be a subset of $T_X$ and $[P]\in T_X$.\\
(\textbf{i}) If $I(Y)=I_L$ then $I([P]Y)=\la \x^{\m^+}-\x^{\m}(P)\x^{\m^-}  ~ | ~  \m \in L \ra$.\\
(\textbf{ii}) $I([P])=\la \x^{\m^+}-\x^{\m}(P)\x^{\m^-}  ~ | ~ \m \in L_{\beta} \ra$ is a prime ideal of height $n$.\\ 
(\textbf{iii}) The minimal primary decomposition of $I(Y)$ is as follows:
$$\displaystyle I(Y)=\bigcap_{[P] \in Y} I([P]).$$ 
(\textbf{iv}) $I(Y)$ is radical.\\ 
(\textbf{v}) $\height I(Y)=n$.\\
(\textbf{vi}) $\dim I(Y)=r-n$. 
\end{tm}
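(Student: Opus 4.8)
The engine for the entire theorem is the translation automorphism attached to a point. Fix a representative $P=(p_1,\dots,p_r)\in(\F_q^*)^r$ of $[P]$ and let $\psi_P\colon S\to S$ be the graded $\F_q$-algebra automorphism with $x_j\mapsto p_jx_j$; it is invertible since each $p_j\neq 0$, with inverse $\psi_{P^{-1}}$. On a monomial $\psi_P(\x^{\a})=\x^{\a}(P)\,\x^{\a}$, so for homogeneous $F=\sum_{\a}\lambda_{\a}\x^{\a}$ and any $[Q]\in T_X$ one gets $\psi_P(F)(Q)=\sum_{\a}\lambda_{\a}\x^{\a}(P)\x^{\a}(Q)=\sum_{\a}\lambda_{\a}\x^{\a}(PQ)=F(PQ)$. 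Thus a homogeneous $F$ vanishes on $[P]Y$ if and only if $\psi_P(F)$ vanishes on $Y$, and because both ideals are homogeneous this reads $I([P]Y)=\psi_{P^{-1}}(I(Y))$. For (i) I feed in $I(Y)=I_L=\la F_{\m}\mid \m\in L\ra$: since $\x^{\m^-}(P^{-1})/\x^{\m^+}(P^{-1})=\x^{\m}(P)$, a one-line computation gives $\psi_{P^{-1}}(F_{\m})=\x^{\m^+}(P^{-1})\bigl(\x^{\m^+}-\x^{\m}(P)\x^{\m^-}\bigr)$. The leading scalar is a unit, so $I([P]Y)=\psi_{P^{-1}}(I_L)$ is generated by the binomials $\x^{\m^+}-\x^{\m}(P)\x^{\m^-}$, $\m\in L$, which is exactly (i).

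For (ii) I specialize (i) to $Y=\{[1]\}$. By Corollary~\ref{c:torus}(i), $I([1])=I_{L_{\beta}}$, and $[P]\cdot\{[1]\}=\{[P]\}$, so (i) with $L=L_{\beta}$ produces the stated formula $I([P])=\la \x^{\m^+}-\x^{\m}(P)\x^{\m^-}\mid \m\in L_{\beta}\ra$. Primality and height I read off from the quotient. The isomorphism $\psi_{P^{-1}}$ gives $S/I([P])\cong S/I_{L_{\beta}}$, and a degreewise check (in degree $\aa$ the map $f\mapsto f(1,\dots,1)$ on $S_{\aa}$ has image $\F_q$ precisely for $\aa\in\N\beta$ and kernel $I_{L_{\beta}}\cap S_{\aa}$) identifies $S/I_{L_{\beta}}$ with the affine semigroup ring $\F_q[\N\beta]$. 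Because $L_{\beta}=\im\phi$ is saturated in $\Z^r$, this is the toric ring of the configuration $\beta_1,\dots,\beta_r$, hence an integral domain, so $I([P])$ is prime. Its height is $r-\dim\F_q[\N\beta]=r-\rank\cA=r-(r-n)=n$; geometrically the affine zero locus of $I([P])$ is the closure of the single orbit $G\cdot P$, of dimension $\dim G=r-n$. This gives (ii).

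Parts (iii) and (iv) then fall out. Writing the finite set $Y=\bigcup_{[P]\in Y}\{[P]\}$ and using that $I(\,\cdot\,)$ converts unions into intersections yields $I(Y)=\bigcap_{[P]\in Y}I([P])$. This is the minimal primary decomposition: each $I([P])$ is prime by (ii); distinct points give distinct components, because if $[P]\neq[P']$ then some $\m\in L_{\beta}$ satisfies $\x^{\m}(P)\neq\x^{\m}(P')$, whence $\x^{\m^+}-\x^{\m}(P)\x^{\m^-}$ lies in $I([P])\setminus I([P'])$; and since all components are primes of the same height $n$, none can contain another, so the intersection is irredundant. Part (iv) is then immediate, as an intersection of prime ideals is radical.

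Finally (v) and (vi) are formal. From the irredundant prime decomposition, $\height I(Y)=\min_{[P]\in Y}\height I([P])=n$ by (ii), which is (v); and since $S$ is a polynomial ring over a field and $I(Y)$ is equidimensional of height $n$, $\dim I(Y)=\dim S/I(Y)=r-\height I(Y)=r-n$, which is (vi). The one genuinely delicate input is the primality in (ii): the automorphism bookkeeping of (i) and the consequences (iii)--(vi) are routine, so I expect the main work to be the clean identification $S/I_{L_{\beta}}\cong\F_q[\N\beta]$ together with the saturation of $L_{\beta}$ that makes this semigroup ring a domain, with the point-separation step in (iii) the only other place requiring care.
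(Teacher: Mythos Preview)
Your proof is correct and follows essentially the same route as the paper: the translation automorphism $\psi_{P^{-1}}$ is exactly the paper's $F\mapsto F_p(\x):=F(p^{-1}\x)$, the specialization $Y=\{[1]\}$ via Corollary~\ref{c:torus}(i) is identical, and parts (iii)--(vi) are deduced as formal consequences of the prime decomposition just as in the paper. The only cosmetic differences are that you spell out the primality of $I_{L_{\beta}}$ via the saturation of $L_{\beta}$ and the semigroup-ring identification (the paper simply calls it a toric ideal and cites \cite[Proposition~7.5]{MS} for the dimension count), and your irredundancy argument in (iii) separates points by an explicit binomial, whereas the paper observes the symmetry $I([P])\subseteq I([P_0])\Leftrightarrow I([P_0])\subseteq I([P])$; both are equivalent and short.
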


\begin{proof}
(\textbf{i}) Let $F_p(\x):=F(p^{-1}\x)$, for a polynomial $F\in S$. Then $F_{p}\in I([P]Y)$ if and only if $F\in I(Y)$, since $F_p(p\t)=F(p^{-1}p\t)=F(\t)$ for all $\t\in Y$. If $I(Y)=I_L$ and $F(\x)=\x^{\m^+}-\x^{\m^-}$ is a generator for $I_L$, then $$F_p(\x)=\x^{-\m^+}(P)\x^{\m^+}-\x^{-\m^-}(P)\x^{\m^-}=\x^{-\m^+}(P)(\x^{\m^+}-\x^{\m}(P)\x^{\m^-}),$$
giving a generator $\x^{\m^+}-\x^{\m}(P)\x^{\m^-}$ for $I([P]Y)$. When $[P]=[P']$, $p=gp'$, for some $g\in G$. Since $\x^{\m}(P)=\x^{\m}(g)\x^{\m}(P')=\x^{\m}(P')$, for all $g\in G$ and $\m\in L \subset L_\bb$, it follows that $\x^{\m^+}-\x^{\m}(P)\x^{\m^-}$ is independent of the representative of $[P]$.

(\textbf{ii}) Since $\{[P]\}=[P]Y$ for $Y=\{[1]\}$, the claim follows from the previous item and Corollary \ref{c:torus} (\textbf{i}). As the map sending $F$ to $F_p$ is ring autormorphism, it follows that $I([P])$ is a prime ideal of height $n$ if and only if so is the toric ideal $I([1])=I_{L_\bb}$. By \cite[Proposition 7.5]{MS}, the Krull dimension of $S/I_{L_\bb}$, which is $r-\height I_{L_\bb}$, equals $r-\rank {L_\bb}=r-n$, whence the result. 

(\textbf{iii}) Since intersection of homogeneous ideals is homogeneous, we have $$\displaystyle I(Y)=\bigcap_{[P] \in Y} I([P]).$$	
In order to show that this decomposition is minimal we prove that $I([P])$ is a minimal prime of $I(Y)$, for every $[P]\in Y$. This follows from the following observation. $ (\x^{\m^+}-\x^{\m}(P)\x^{\m^-})(P_0)=0$ if and only if $\x^{\m}(P)=\x^{\m}(P_0)$ if and only if $ (\x^{\m^+}-\x^{\m}(P_0)\x^{\m^-})(P)=0$. Thus, $I([P])\subset I([P_0])$ if and only if $I([P_0])\subset I([P])$, meaning that $I([P])\subset I([P_0])$ if and only if $I([P])= I([P_0])$.

(\textbf{iv}) This follows directly from the previous decomposition and the fact that each $I([P])$ is prime.
  
(\textbf{v}) $\height I(Y)=n$, since all the minimal primes are of height $n$ and thus the maximum is $n$.

(\textbf{vi}) $\dim I(Y)$ is the maximum of $\dim I([P])$ as $[P]$ vary in $Y$. Since $\dim I([P])=r-\height I([P])$, for all points $[P] \in Y$, the result follows from item (\textbf{ii}).
\end{proof}

\section{Evaluation codes on degenerate tori} \label{S:codes} 
In this section we apply our results in previous sections to
evaluation codes on degenerate tori in a toric variety. 

Recall the basic definitions from coding theory. Let $\F_q$ be a finite field of $q$ elements and 
$\F_q^*=\F_q\setminus\{0\}$ its multiplicative group. 
A subspace $\cC$ of $ \F_q^{N}$ is called a \textit{linear code}, and its elements 
${c}=(c_{1},\dotsc,c_{N})$ are called \textit{codewords}.  The number $N$ is called the {\it block-length} of $\cC$.
The {\it weight} of $c$ in $\cC$ is the number of non-zero entries in $c$.
The {\it distance} between two codewords $a$ and $b$ in $\mathcal{C}$ is the weight of $a-b\in\cC$.
The minimum distance between distinct codewords in $\cC$ is the same as the minimum weight of  non-zero codewords in $\cC$. The block-length $N$, the dimension $k=\dim_{\F_q}(\cC)$, and the minimum
distance $d=d(\cC)$ are the basic parameters of $\cC$.

Now, let $X$ be a simplicial complete toric variety over $\F_q$ with torsion-free class group and $S$ its 
homogeneous coordinate ring as in the previous sections. 

  Now, we recall evaluation codes defined on subsets $Y=\{[P_1],\dots,[P_N]\}$ of the torus $T_X$. Fix a degree $\aa\in\N\beta$ and a monomial $F_0=\x^{\phi(\m_0)+\a} \in S_{\aa}$,
where  $\m_0\in \Z^n$, $\a$ is any element of $\Z^r$ with $\deg(\a)=\aa$, and $\phi$ as in the exact sequence $\mathfrak{P}$.
This defines the {\it evaluation map}
\begin{equation}\label{e:evalmap}
\text{ev}_{Y}:S_\aa\to \F_q^N,\quad F\mapsto \left(\frac{F(P_1)}{F_0(P_1)},\dots,\frac{F(P_N)}{F_0(P_N)}\right).
\end{equation}
The image $\cC_{\aa,Y}=\text{ev}_{Y}(S_\aa)$ is a linear code, called the {\it generalized toric code}.
It can be readily seen that different choices of $F_0\in S_\aa$ yield to equivalent codes. Clearly, the block-length $N$ of $\cC_{\aa,Y}$ equals $|T_X|=(q-1)^n$ in the case of toric codes where $Y=T_X$, which was introduced for the first time by Hansen in \cite{Ha0, Ha1}. A way to compute the dimension of a toric code is given in \cite{Ru}. In the generalized case, multigraded Hilbert function can be used to compute the length and dimension as shown by \c{S}ahin and Soprunov in \cite{sasop}. For general information about algebraic geometry codes we refer the reader to \cite{TVN} and 
\cite{Li}.


 The next proposition provides a way to calculate the dimension of the code as the
 value of the Hilbert function $H_Y(\aa):=\dim_{\K} S_{\aa}-\dim_{\K} I(Y)_{\aa}$. 
 
\begin{pro}\label{p:dim=HF} 
$\dim_{\K} \cC_{\aa,Y}$ equals $H_{Y}(\aa)$.
\end{pro}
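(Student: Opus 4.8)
The plan is to compute the dimension of the image $\cC_{\aa,Y}=\ev_Y(S_\aa)$ by means of the rank--nullity theorem applied to the linear map $\ev_Y\colon S_\aa\to\F_q^N$. Indeed, $\dim_\K\cC_{\aa,Y}=\dim_\K S_\aa-\dim_\K\Ker(\ev_Y)$, so the entire content of the proposition reduces to identifying the kernel of $\ev_Y$ with the graded piece $I(Y)_\aa$ of the vanishing ideal. Thus the first step is to make precise the claim $\Ker(\ev_Y)=I(Y)_\aa$, and the proposition then follows immediately since $H_Y(\aa)=\dim_\K S_\aa-\dim_\K I(Y)_\aa$ by definition.

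To establish $\Ker(\ev_Y)=I(Y)_\aa$, I would argue both inclusions. For the inclusion $I(Y)_\aa\subseteq\Ker(\ev_Y)$: if $F\in I(Y)_\aa$ then $F$ is a homogeneous polynomial of degree $\aa$ vanishing on every $[P_i]\in Y$, hence $F(P_i)=0$ for each $i$ (here one must note that for $F$ homogeneous the condition $F(P_i)=0$ is well defined independently of the representative $P_i$ of $[P_i]$, since multiplying $P_i$ by an element of $G$ scales $F(P_i)$ by a nonzero factor), and therefore $\ev_Y(F)=\bigl(F(P_1)/F_0(P_1),\dots,F(P_N)/F_0(P_N)\bigr)=0$. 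For the reverse inclusion $\Ker(\ev_Y)\subseteq I(Y)_\aa$: if $F\in S_\aa$ with $\ev_Y(F)=0$, then $F(P_i)/F_0(P_i)=0$ for every $i$; since $F_0$ is a monomial in the $x_j$ and the points $[P_i]\in T_X$ have all coordinates nonzero, $F_0(P_i)\neq 0$, so $F(P_i)=0$ for all $i$, which means $F$ vanishes on $Y$. Being homogeneous of degree $\aa$, $F$ therefore lies in $I(Y)_\aa$.

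The only subtlety, and the step I expect to require the most care, is the well-definedness issue: the evaluation $F(P_i)$ of a homogeneous polynomial at a point of $X$ depends on the chosen lift $P_i\in\cK\setminus V(B)$ only up to a nonzero scalar (because the $G$-action rescales $F$ by a character), so "$F(P_i)=0$" is unambiguous, and likewise the quotient $F(P_i)/F_0(P_i)$ is genuinely well defined whenever $F$ and $F_0$ have the same degree $\aa$ and $F_0(P_i)\neq 0$. Once this bookkeeping is in place, everything else is the routine rank--nullity computation. I would also remark that the hypothesis $Y\subseteq T_X$ is exactly what guarantees $F_0(P_i)\neq 0$, so that division by $F_0(P_i)$ makes sense and does not change the vanishing locus; this is where the restriction to subsets of the torus enters.
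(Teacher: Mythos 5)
Your proposal is correct and follows essentially the same route as the paper, which simply observes that $\Ker(\ev_Y)=I(Y)_\aa$ and concludes by rank--nullity; you have merely spelled out the two inclusions and the well-definedness of evaluation at points of $T_X$, which the paper leaves implicit.
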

\begin{proof} This follows as the kernel of the evaluation map is exactly the the degree $\aa$ part of the vanishing ideal $I(Y)$. 
\end{proof}

We write $\aa \preceq \aa'$ if $\aa'-\aa$ lies in $\N\beta$, where $\N\beta\subset \cA$ denotes the semigroup generated by the degrees $\bb_i$ of the variables $x_i$. 
\begin{coro}\label{c:dimcode} If the set $Y=Y_{A,H}$ and $\aa \preceq d_1\bb_1+\cdots +d_r \bb_r$, then $ev_{Y}$ is injective, that is, $\dim_{\K} \cC_{\aa,Y}=\dim_{\K} S_{\aa}$.
\end{coro}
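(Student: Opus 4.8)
The plan is to convert the injectivity of $\ev_Y$ into the vanishing of a single graded piece of the vanishing ideal $I(Y)$, and then to use the explicit description of $I(Y)$ as a lattice ideal. By Proposition \ref{p:dim=HF}, $\dim_\K\cC_{\aa,Y}=H_Y(\aa)=\dim_\K S_\aa-\dim_\K I(Y)_\aa$, and the proof of that proposition identifies the kernel of $\ev_Y$ with $I(Y)_\aa$. Hence $\ev_Y$ is injective on $S_\aa$ if and only if $I(Y)_\aa=0$, and the corollary reduces to showing that $\aa\preceq d_1\bb_1+\cdots+d_r\bb_r$ forces $I(Y)_\aa=0$.

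Next I would invoke Theorem \ref{t:I(Y_A)}, which gives $I(Y)=I_L$ with $L=D(L_{\bb D})$, together with Proposition \ref{p:ci}, so that a binomial generating system of $I(Y)$ is obtained from one of the toric ideal $I_{L_{\bb D}}$ by the substitution $x_i\mapsto x_i^{d_i}$ and every binomial of $I_L$ has the form $\x^{D(\m')^{+}}-\x^{D(\m')^{-}}$ with $\m'\in L_{\bb D}$. Because $I_L$ is a lattice ideal, a standard ``carry'' argument — multiplying the two terms of such a binomial by a common monomial until its degree becomes $\aa$ — shows that $I(Y)_\aa\neq 0$ if and only if there is a nonzero $\m=D\m'\in L$ with $\deg(\x^{\m^{+}})\preceq\aa$. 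Since $\m^{+}=D(\m')^{+}$, this degree equals $\sum_{i:\,m'_i>0}m'_i\,d_i\bb_i$, so the corollary comes down to proving that when $\aa\preceq d_1\bb_1+\cdots+d_r\bb_r$ there is no nonzero $\m'\in L_{\bb D}$ with $\sum_{i:\,m'_i>0}m'_i\,d_i\bb_i\preceq\aa$.

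This degree estimate is, I expect, the main obstacle. The structure one has to work with is the homogeneity relation $\sum_i m'_i\,d_i\bb_i=0$ defining $L_{\bb D}$ — so for a nonzero $\m'$ the positive and negative parts produce one and the same positive combination of the vectors $d_1\bb_1,\dots,d_r\bb_r$ — together with the fact that $L\cap\N^r=\{0\}$, which holds because $X$ is complete and which is exactly the hypothesis under which Theorem \ref{t:MT} was applied to $I(Y)$. The delicate point is to combine these so as to show that every degree $\sum_{i:\,m'_i>0}m'_i\,d_i\bb_i$ coming from a nonzero $\m'\in L_{\bb D}$ lies outside the region $\{\,\aa : \aa\preceq d_1\bb_1+\cdots+d_r\bb_r\,\}$ determined by $\N\bb$. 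Once that is established, $I(Y)_\aa=0$ for every $\aa$ with $\aa\preceq d_1\bb_1+\cdots+d_r\bb_r$, so $\ev_Y$ is injective and $\dim_\K\cC_{\aa,Y}=\dim_\K S_\aa$, as claimed.
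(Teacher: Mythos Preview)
Your reduction is correct and matches the paper's approach: injectivity of $\ev_Y$ on $S_\aa$ is equivalent to $I(Y)_\aa=0$, and by Theorem~\ref{t:I(Y_A)} and Proposition~\ref{p:ci} the ideal $I(Y)=I_L$ with $L=D(L_{\bb D})$ is generated by binomials $\x^{D(\m')^{+}}-\x^{D(\m')^{-}}$ for $\m'\in L_{\bb D}$. You also correctly isolate the crux as the degree estimate in your final paragraph --- and you are right to flag it as an obstacle rather than assert it, because that estimate is in fact \emph{false}. Take $X=\Pp^1$, so $r=2$, $\bb_1=\bb_2=1\in\Z=\cA$, and take $A=I_2$, $H=\F_q^*$. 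Then $d_1=d_2=q-1$, $Y=T_X$, and $I(Y)=\langle x_1^{q-1}-x_2^{q-1}\rangle$ by Corollary~\ref{c:torus}. The generator has degree $q-1$, while $d_1\bb_1+d_2\bb_2=2(q-1)$; since $2(q-1)-(q-1)=q-1\in\N\bb=\N$, we have $q-1\preceq d_1\bb_1+d_2\bb_2$, yet $I(Y)_{q-1}\neq 0$ and $\ev_Y$ is not injective there.

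The paper's own proof makes exactly the jump you hesitate over: from ``the generators are obtained by substituting $x_i^{d_i}$ for $x_i$'' it asserts ``their degrees are at least $d_i\bb_i$'' and immediately concludes $I(Y)_\aa=0$, without explaining why a generator of degree $\sum_{i:\,m'_i>0} m'_i\,d_i\bb_i$ could not still satisfy $\preceq\sum_i d_i\bb_i$. Your instinct that this is ``the main obstacle'' is precisely right; the example above shows it is not merely delicate but unfillable. The corollary as stated needs a sharper hypothesis on $\aa$ --- one guaranteeing that distinct exponent vectors $\a\in\N^r$ with $\bb(\a)=\aa$ remain distinct modulo $D$ --- rather than a cleverer proof of the present claim.
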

\begin{proof} In this case we have $I(Y)=I_L$ for $L=D(L_{\bb D})$ by Theorem \ref{t:I(Y_A)}, where $D=diag (d_1,\dots,d_r)$. By Proposition \ref{p:ci}, the generators of $I(Y)$ are obtained from the generators of the toric ideal $I_{L_{\bb}}$ by substituting $x_i^{d_i}$ in $x_i$. Thus, their degrees are at least $d_i\bb_i$. In particular, the degree $\aa$ part of the kernel $I(Y)_{\aa}=\{0\}$ when $\aa \preceq d_1\bb_1+\cdots +d_r \bb_r$.
\end{proof}

\begin{coro}\label{c:lengthcode} If the set $Y=Y_{A,H}$ and $d_1,\dots ,d_r$ are pairwise relatively prime, then the length of $\cC_{\aa,Y}$ is $d_1\cdots d_r$.
\end{coro}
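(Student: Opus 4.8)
The plan is to reduce the length of the code $\cC_{\aa,Y}$ to the cardinality of $Y=Y_{A,H}$, and then invoke the group-theoretic analysis already carried out in this section. The block-length $N$ of $\cC_{\aa,Y}$ is by definition the number of points in $Y$, so the statement amounts to showing $|Y_{A,H}|=d_1\cdots d_r$ when $d_1,\dots,d_r$ are pairwise coprime. But this is precisely the content of Proposition \ref{p:degY}: under the pairwise coprimality hypothesis on the $d_i$, the set $Y_{A,H}$ is a cyclic group of order $d_1\cdots d_r$. So the proof is essentially a one-line deduction.

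More carefully, I would first recall that $h=|\eta|$, that $H=\la\eta\ra$, and that $d_i=|\eta^{a_i}|$, so the $d_i$ are determined by $A$ and $H$ exactly as in the setup preceding Proposition \ref{p:degY}. The pairwise coprimality hypothesis is the hypothesis of that proposition, hence $Y_{A,H}=\la[P_1]\ra\times\cdots\times\la[P_r]\ra$ is a direct product of cyclic groups of orders $d_1,\dots,d_r$, which therefore has exactly $d_1\cdots d_r$ distinct elements. Since $Y$ is listed as $Y=\{[P_1],\dots,[P_N]\}$ in the definition of the evaluation map, we get $N=|Y|=d_1\cdots d_r$, and the length of $\cC_{\aa,Y}$ is $N$ by definition.

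The only point that requires a word of care is the implicit claim that the $N$ points appearing in the evaluation map \re{evalmap} are genuinely distinct, i.e.\ that the length of a generalized toric code is $|Y|$ as a set and not the number of parameter tuples $(t_1,\dots,t_r)\in H^r$; but this is exactly how $Y$ is introduced (as a finite subset $\{[P_1],\dots,[P_N]\}$ of $T_X$), and the parametrization $Y_{A,H}$ is a description of that set. So there is no genuine obstacle here: the work was done in Proposition \ref{p:degY}, and this corollary merely transports that count to the coding-theoretic language. If one wished to be completely explicit, one could add the remark that without pairwise coprimality the length is still $|Y_{A,H}|$, but this cardinality need not equal $d_1\cdots d_r$, as Example \ref{ex:notcyclic} shows.
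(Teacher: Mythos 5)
Your proof is correct and takes the same route as the paper, which simply deduces the corollary from Proposition~\ref{p:degY}: the length of $\cC_{\aa,Y}$ is $|Y_{A,H}|$, which equals $d_1\cdots d_r$ under the pairwise coprimality hypothesis. Your additional remarks about distinctness of the points and the failure of the count without coprimality are sensible but not needed beyond what the paper records.
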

\begin{proof} This follows directly from Proposition \ref{p:degY}.
\end{proof}
We finish with the following example.

\begin{ex} \label{ex:idealnotcyclicCode}
Take $X=\cl H_{2}$ to be the Hirzebruch surface over $\F_{11}$ and consider $(a_1,a_2,a_3,a_4)=(2,5,4,5)$. Then, we have $(d_1,d_2,d_3,d_4)=(5,2,5,2)$ and $\gcd(d_1,d_2,d_3,d_4)=1$. Thus, $I(Y_{A,\K^*})=\la x_1^5-x_3^5, x_1^{20}x_2^{10}-x_4^{10} \ra$ by Example \ref{ex:idealnotcyclic}. The fan $\Sig$ of $X$ determines an important subsemigroup $\cl K$ of the semigroup $\N\bb$. Namely, $\dis \cl K=\cap_{\sig \in \Sig} \N\hat{\sig}$, where $\N\hat{\sig}$ is the semigroup generated by the subset $\{\bb_j \; : \; \rho_j \notin \sig\}$. In this example, $\dis \cl K=\N^2$. Since the degrees $(5,0)$ and $(0,10)$ lie in $\cl K$, it follows that the Hilbert function at $(5,10)$ gives us the length by \cite{sasop}.
\begin{verbatim}
i1 : hilbertFunction({5,10},IYQ)
o1 = 50
\end{verbatim}

On the other hand, if we take $(a_1,a_2,a_3,a_4)=(5,2,5,4)$, then 
$(d_1,d_2,d_3,d_4)=(2,5,2,5)$ and $\gcd(d_1,d_2,d_3,d_4)=1$ as before. Similarly, the vanishing ideal is found to be $I(Y_{A,\K^*})=\la x_1^2-x_3^2, x_1^{10}x_2^{5}-x_4^{5} \ra$. Note that the matrix Gamma above is mixed-dominating and that the ideal $I(Y_{A,\K^*})$ is a complete intersection, in both cases.

Dimensions of various codes are encoded in the matrix below:
\begin{verbatim}
i2 : apply(6,j-> apply(18,i-> hilbertFunction({i-5,5-j},IYQ)));
i3 : oo / print @@ print
{06,07,08,09,10,10,10,10,10,10,10,10,10,10,10,10,10,10}
{04,05,06,07,08,09,10,10,10,10,10,10,10,10,10,10,10,10}
{02,03,04,05,06,07,08,08,08,08,08,08,08,08,08,08,08,08}
{00,01,02,03,04,05,06,06,06,06,06,06,06,06,06,06,06,06}
{00,00,00,01,02,03,04,04,04,04,04,04,04,04,04,04,04,04}
{00,00,00,00,00,01,02,02,02,02,02,02,02,02,02,02,02,02}
\end{verbatim}
\end{ex}

\section*{Acknowledgements} All the examples were computed by using the computer algebra system Macaulay $2$, see \cite{Mac2}.
\bibliographystyle{amsplain}

\end{document}